\theoremstyle{definition}\newtheorem{prop}{Proposition}
\theoremstyle{definition}\newtheorem{conj}{Conjecture}
\theoremstyle{definition}\newtheorem{definition}[prop]{Definition}
\theoremstyle{definition}\newtheorem{lemma}[prop]{Lemma}
\theoremstyle{definition}\newtheorem{corollary}[prop]{Corollary}
\theoremstyle{definition}\newtheorem{theorem}[prop]{Theorem}
\theoremstyle{definition}\newtheorem{example}[prop]{Example}
\title{Independence equivalence classes of cycles}
\author{Boon Leong Ng}
\date{%
    National Institute of Education, Nanyang Technological University, Singapore\\
    \today
}
\begin{document}

\maketitle

\begin{abstract}
The independence equivalence class of a graph $G$ is the set of graphs that have the same independence polynomial as $G$. Beaton, Brown and Cameron~(2019) found the independence equivalence classes of even cycles, and raised the problem of finding the independence equivalence class of odd cycles. The problem is completely solved in this paper.
\end{abstract}

\section{Definitions and Introduction}
Let $G$ be a simple graph with vertex set $V(G)$ and edge set $E(G)$. A set of vertices $U\subseteq V(G)$ is said to be \emph{independent} if no two vertices in $U$ are adjacent. The \emph{independence number} $\alpha(G)$ of $G$ is the cardinality of the largest independent set of $G$. The \emph{independence polynomial} $I(G,x)$ of $G$ is given by $I(G,x) = \sum_{k=0}^{\alpha(G)} i_k(G) x^k$ where $i_k(G)$ is the number of independent subsets of $V(G)$ of cardinality $k$. A survey of results on independence polynomials can be found in~\cite{levit}.

Two graphs $G$ and $H$ are said to be \emph{independence equivalent} if $I(G,x) = I(H,x)$. The \emph{independence equivalence class} $\mathcal{I}(G)$ of a graph $G$ is the set of graphs which are independence equivalent to $G$. A graph $G$ is said to be \emph{independence unique} if $\mathcal{I}(G)=\{G\}$, that is, if $I(G,x) = I(H,x)$ implies that $G\cong H$ ($G$ is isomorphic to $H$).

In the study of graph polynomials, the problem of finding nonisomorphic graphs that have equivalent polynomials is one that naturally arises. An equivalence class analogous to the independence equivalence class can be defined for any graph polynomial. In the case of chromatic polynomials, this has a long history (see~\cite{bari,chao,chiaho,lauzhang} and Chapter~3 of~\cite{dong}). More recently, the study of domination polynomials has also led to results on domination equivalence~\cite{akbari,alikhanidom,beaton2,jahari}.

Returning to independence polynomials, the existence of graphs with the same independence polynomial was already noticed very early on by Wingard~\cite{wingard}. The following two results, on finding the independence polynomial of a graph in terms of that of its subgraphs, are very useful in showing that graphs are independence equivalent.
\begin{prop}\cite{hoedeli}\label{prop:delvert}
For any vertex $u\in V(G)$,
$$I(G, x) = I(G-u, x)+xI(G-N[u], x).$$
\end{prop}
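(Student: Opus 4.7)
The plan is to prove the identity by partitioning the independent sets of $G$ according to whether or not they contain the distinguished vertex $u$, and then reading off the coefficient-wise consequence.

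First I would fix $k \geq 0$ and split the collection of independent $k$-subsets of $V(G)$ into two disjoint families: those not containing $u$, and those containing $u$. An independent set $U \subseteq V(G)$ with $u \notin U$ is the same thing as an independent set of the subgraph $G-u$, since deleting $u$ does not affect the adjacency relations among the remaining vertices. This gives a bijection showing that the number of independent $k$-subsets of $G$ not containing $u$ equals $i_k(G-u)$.

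Next I would handle the sets containing $u$. If $U$ is an independent set of $G$ with $u \in U$, then no neighbour of $u$ lies in $U$, so $U \setminus \{u\} \subseteq V(G) \setminus N[u]$. Conversely, any independent set of $G - N[u]$ remains independent when $u$ is adjoined, because $u$ has no neighbours in $V(G) \setminus N[u]$. This produces a bijection between independent $k$-subsets of $G$ containing $u$ and independent $(k-1)$-subsets of $G - N[u]$, yielding a count of $i_{k-1}(G - N[u])$.

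Combining the two cases gives the recurrence $i_k(G) = i_k(G-u) + i_{k-1}(G-N[u])$ for all $k \geq 0$ (with the convention $i_{-1} = 0$). Multiplying by $x^k$ and summing over $k$, the second sum picks up an extra factor of $x$ after re-indexing, which produces exactly $I(G,x) = I(G-u,x) + x\,I(G-N[u],x)$. There is no real obstacle here; the only point requiring a touch of care is the boundary behaviour at $k=0$ and the empty-set term in $I(G-N[u],x)$, and these are handled automatically by the conventions on $i_0$ and $i_{-1}$.
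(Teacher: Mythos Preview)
Your argument is correct and is exactly the standard partition-by-membership proof of this identity. The paper itself does not supply a proof of this proposition; it is quoted from \cite{hoedeli} without proof, so there is nothing to compare against beyond noting that your write-up is the usual one.
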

\begin{prop}\cite{hoedeli}\label{prop:deledge}
For any edge $e=uv$ in $E(G)$,
$$I(G, x) = I(G-e, x)-x^2 I(G-(N(u)\cup N(v)), x).$$
\end{prop}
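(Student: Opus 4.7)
The plan is a direct combinatorial proof by partitioning the independent sets of $G-e$ according to whether they contain both endpoints of the edge $e=uv$. Since $G$ and $G-e$ have the same vertex set and differ only in the single edge $uv$, a set $S \subseteq V(G)$ is independent in $G$ iff it is independent in $G-e$ and does not contain both $u$ and $v$. Hence, writing $J(x)$ for the generating polynomial (by cardinality) of those independent sets $S$ of $G-e$ with $\{u,v\}\subseteq S$,
$$I(G-e,x) \;=\; I(G,x) \;+\; J(x).$$

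The next step is to evaluate $J(x)$ via the bijection $S \mapsto S' := S\setminus\{u,v\}$. The set $\{u,v\}\cup S'$ is independent in $G-e$ iff $S'$ is disjoint from $N_{G-e}[u]\cup N_{G-e}[v]$ and is itself independent in $G-e$. Using $uv\in E(G)$ (so that $u\in N_G(v)$ and $v\in N_G(u)$), one computes
$$N_{G-e}[u]\cup N_{G-e}[v] \;=\; \bigl((N_G(u)\setminus\{v\})\cup\{u\}\bigr)\cup\bigl((N_G(v)\setminus\{u\})\cup\{v\}\bigr) \;=\; N_G(u)\cup N_G(v),$$
and since $u,v$ are excluded from $S'$, deleting $e$ has no effect on independence of $S'$. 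Hence $S'$ ranges exactly over the independent sets of $G-(N(u)\cup N(v))$, and $|S|=|S'|+2$ gives $J(x) = x^2\, I(G-(N(u)\cup N(v)),x)$. Rearranging yields the stated identity.

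The only delicate point is the neighborhood equality above, which is what makes the $x^2$ factor land on exactly the subgraph claimed; the rest is a routine partitioning argument. As a sanity check one could alternatively derive the formula by applying Proposition~\ref{prop:delvert} to $u$ in both $G$ and $G-e$, subtracting (noting $G-u=(G-e)-u$), and then applying Proposition~\ref{prop:delvert} once more to $v$ inside the smaller graph that remains, but the direct bijection above is shorter and more transparent.
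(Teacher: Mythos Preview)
Your proof is correct. The paper does not supply its own proof of this proposition; it is quoted from Hoede and Li~\cite{hoedeli} as a standard tool and used without argument. Your direct bijective derivation---partitioning the independent sets of $G-e$ by whether they contain $\{u,v\}$, and then checking that $N_{G-e}[u]\cup N_{G-e}[v]=N_G(u)\cup N_G(v)$---is the standard way to establish this identity, and all the steps are sound.
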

\begin{figure}
\centering
\begin{tikzpicture}
\filldraw(0,0) circle[radius=2pt]node[left]{$a$};
\filldraw(0,2) circle[radius=2pt];
\filldraw(1,1) circle[radius=2pt]node[above]{$b$};
\filldraw(2,1) circle[radius=2pt];
\node at (3,1) {$\cdots$};
\filldraw(4,1) circle[radius=2pt];
\draw(0,0)--(0,2);
\draw(0,0)--(1,1);
\draw(0,2)--(1,1);
\draw(1,1)--(2,1);
\draw(2,1)--(2.5,1);
\draw(3.5,1)--(4,1);
\node at (0.6,0.4) {$f$};
\end{tikzpicture}
\caption{The graph $D_n$ with $n$ vertices}\label{fig:dn}
\end{figure}
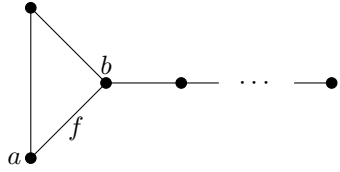
For example, the following family of independence equivalent graphs was observed by Chism~\cite{chism}.

\begin{prop}\cite{chism}\label{prop:cndn}
For $n\geq 4$, if $G$ is the cyclic graph $C_n$ and $H$ is the graph $D_n$ shown in Figure~\ref{fig:dn}, then $C_n$ and $D_n$ are independence equivalent.
\end{prop}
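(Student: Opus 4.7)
The plan is to apply the vertex-deletion recurrence from Proposition~\ref{prop:delvert} to both $C_n$ and $D_n$, choosing the vertex so that the two pieces of the recurrence become the same pair of path independence polynomials in both cases. The key observation is that in $D_n$, the vertex $a$ has closed neighborhood consisting of the three triangle vertices, so deleting either $\{a\}$ or $N[a]$ leaves a disjoint union of paths; the same is true for any vertex of $C_n$, since deleting a vertex of a cycle leaves a path and deleting its closed neighborhood leaves a shorter path.

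Concretely, first I would pick any vertex $u \in V(C_n)$. Since $u$ has exactly two neighbors in $C_n$, we get $C_n - u \cong P_{n-1}$ and $C_n - N[u] \cong P_{n-3}$, so by Proposition~\ref{prop:delvert},
\[
I(C_n, x) \;=\; I(P_{n-1}, x) + x\, I(P_{n-3}, x).
\]
Next, I would apply Proposition~\ref{prop:delvert} in $D_n$ to the labelled vertex $a$. Writing $a'$ for the third vertex of the triangle, we have $N[a] = \{a, a', b\}$. Removing $a$ from $D_n$ leaves the edge $a'b$ together with the path hanging off $b$, which is itself a path $P_{n-1}$; removing $N[a]$ deletes the entire triangle and leaves the tail path $P_{n-3}$. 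Hence
\[
I(D_n, x) \;=\; I(P_{n-1}, x) + x\, I(P_{n-3}, x),
\]
which coincides with the expression for $I(C_n, x)$, giving $I(C_n, x) = I(D_n, x)$.

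There is essentially no obstacle here; the whole proof hinges on choosing the right vertex in $D_n$. The only small point worth verifying carefully is that $D_n - a$ is genuinely a path (rather than something more complicated), which follows because once $a$ is removed the triangle collapses to the single edge $a'b$, and this edge then concatenates with the existing tail to form a path on $n-1$ vertices. The hypothesis $n \geq 4$ is needed so that the tail is nonempty and the resulting paths $P_{n-1}$ and $P_{n-3}$ are well defined (with the convention $P_0$ being the empty graph and $P_1$ a single vertex, the argument continues to work at the boundary, but $n \geq 4$ ensures $D_n$ and $C_n$ are not already trivially equal).
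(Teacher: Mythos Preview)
Your proof is correct. The paper does not supply its own proof of this proposition; it is cited from Chism and presented as an illustrative example of the deletion recurrences, so there is no argument in the paper to compare against directly. Your application of Proposition~\ref{prop:delvert} at the vertex $a$ is exactly the kind of argument the surrounding text invites.

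It is perhaps worth noting that Figure~\ref{fig:dn} explicitly labels the edge $f=ab$, which suggests the author may have had the edge-deletion recurrence (Proposition~\ref{prop:deledge}) in mind as the intended route: deleting any edge of $C_n$ gives $P_n$, and removing the corresponding four-vertex open neighbourhood gives $P_{n-4}$; likewise $D_n-f\cong P_n$ and $D_n-(N(a)\cup N(b))\cong P_{n-4}$, so both polynomials equal $I(P_n,x)-x^2 I(P_{n-4},x)$. This edge-based variant and your vertex-based one are equally short and entirely parallel; neither has any advantage over the other.
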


Oboudi~\cite{oboudi} and Beaton, Brown and Cameron~\cite{beaton} considered the question of whether other graphs existed in $\mathcal{I}(C_n)$, and showed the following:

\begin{prop}\cite{oboudi}\label{prop:connecteddn}
If $G$ is a connected graph in $\mathcal{I}(C_n)$, then $G\in\{C_n, D_n\}$.
\end{prop}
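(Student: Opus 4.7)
The plan is to pin down the degree sequence of $G$ from the low-order coefficients of $I(C_n,x)$, and then to use the unicyclic structure of $G$ to identify it explicitly.

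First I would compare $i_0, i_1, i_2$. Matching $i_1(G) = n$ gives $|V(G)| = n$, and the identity $i_2(H) = \binom{|V(H)|}{2} - |E(H)|$, applied to both $G$ and $C_n$, gives $|E(G)| = n$. Since $G$ is connected on $n$ vertices with $n$ edges, $G$ is unicyclic; write $t(G) \in \{0,1\}$ for the number of triangles in $G$.

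Next I would expand $i_3$ via inclusion–exclusion on $3$-subsets containing at least one edge, obtaining
\[
  i_3(H) \;=\; \binom{|V(H)|}{3} \;-\; (|V(H)|-2)\,|E(H)| \;+\; P_2(H) \;-\; t(H),
\]
where $P_2(H) = \sum_{v}\binom{d_H(v)}{2}$ counts length-two paths (the intersection term for two edges sharing a vertex, with triangles triple-counted and then corrected by the three-edge term). For $n \geq 4$ one has $P_2(C_n) = n$ and $t(C_n) = 0$, so matching $i_3$ gives $P_2(G) = n + t(G)$. Combined with $\sum_v d_G(v) = 2n$, this simplifies to
\[
  \sum_{v \in V(G)} \bigl(d_G(v) - 2\bigr)^2 \;=\; 2\,t(G).
\]

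The remainder of the argument splits on $t(G)$. If $t(G) = 0$ then every vertex has degree $2$, so $G$ is a connected $2$-regular graph and $G \cong C_n$. If $t(G) = 1$, the integer constraints combined with $\sum_v d_G(v) = 2n$ force the degree sequence $(3,2,\dots,2,1)$: one vertex $b$ of degree $3$, one vertex of degree $1$, and all others of degree $2$. The unique triangle must contain $b$ (otherwise the three subtrees hanging off the triangle are all trivial, forcing $n=3$); then the other two triangle vertices carry trivial subtrees, and the subtree hanging off $b$ has $n-2$ vertices with $b$ a leaf and exactly one other leaf, forcing it to be a path with $b$ at an end. This identifies $G$ with $D_n$.

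The main obstacle I anticipate is the structural step translating the degree-sequence data into the precise graph $D_n$; the $i_3$ inclusion–exclusion is elementary but must be tracked carefully, and the small cases $n \in \{3,4\}$ should be verified directly so that the deduction $\sum(d_G(v)-2)^2 = 2t(G)$ can be applied uniformly.
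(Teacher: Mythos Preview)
The paper does not supply its own proof of this proposition; it is quoted from Oboudi and used as a black box. So there is no in-paper argument to compare against.

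Your proof is correct. The identity $\sum_v\binom{d_G(v)}{2}=n+n_G(C_3)$ that you extract from matching $i_3$ is precisely the content of the paper's Proposition~\ref{prop:degreesum}(iii), and your consequence $\sum_v(d_G(v)-2)^2=2\,t(G)$ is a clean repackaging of Proposition~\ref{prop:degreesum2} specialised to the connected (hence $g_0=0$) case. The paper develops this same machinery but applies it only to disconnected $G$, deferring the connected case to the citation; your argument therefore fills in exactly what the paper outsources. The final structural step---that a connected unicyclic graph with degree sequence $(3,2,\dots,2,1)$ and a triangle must be $D_n$---is sound: the two triangle vertices other than $b$ are forced to have degree $2$ (degree $1$ is impossible on the triangle), so deleting the triangle edges leaves $a,c$ isolated and a tree on the remaining $n-2$ vertices with degree sequence $(1,1,2,\dots,2)$, i.e.\ a path with $b$ at one end. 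Your caveat about checking $n=3$ separately is appropriate, since there $t(C_3)=1$ and the equation collapses; but $|V|=|E|=3$ with $G$ connected already forces $G=K_3=C_3$.
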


\begin{prop}\cite{beaton}\label{prop:oldcn}
\begin{enumerate}[label=(\roman*)]
    \item $\mathcal{I}(C_6)=\{C_6, D_6, K_4-e \cup K_2\}$ for $n=6$,
    \item $\mathcal{I}(C_n)=\{C_n,D_n\}$ for even $n\geq 4$, $n\neq 6$,
    \item $\mathcal{I}(C_n)=\{C_n,D_n\}$ for $n=p^k$ where $p\geq 5$ is prime and $k$ is a positive integer.
\end{enumerate}
\end{prop}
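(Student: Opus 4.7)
The plan is to combine Proposition \ref{prop:connecteddn}, which handles connected graphs in $\mathcal{I}(C_n)$, with an analysis of how $I(C_n,x)$ can factor into independence polynomials of smaller graphs. If $G\in\mathcal{I}(C_n)$ is disconnected with connected components $G_1,\dots,G_k$ (so $k\geq 2$), then
\[
I(C_n,x)=\prod_{i=1}^{k} I(G_i,x)
\]
is a factorization of $I(C_n,x)$ in $\mathbb{Z}_{\geq 0}[x]$ in which every factor has constant term $1$ and linear coefficient $|V(G_i)|\geq 1$. So the problem reduces to (a) enumerating the factorizations of $I(C_n,x)$ into such polynomials, and (b) deciding which of those are realizable as disjoint unions of connected graphs.

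For (a) I would use the closed form $I(C_n,x)=\alpha^n+\beta^n$, where $\alpha,\beta$ are the roots of $t^2-t-x=0$ (so $\alpha+\beta=1$ and $\alpha\beta=-x$). This follows from the recurrence $I(C_n,x)=I(C_{n-1},x)+xI(C_{n-2},x)$, itself a consequence of Proposition \ref{prop:delvert}. Together with the cyclotomic identity $y^n+z^n=\prod_{d\mid 2n,\,d\nmid n}\Phi^*_d(y,z)$ (with $\Phi^*_d$ the homogenization of the $d$th cyclotomic polynomial), this yields an explicit factorization of $I(C_n,x)$ into $\mathbb{Z}[x]$-factors of $x$-degree $\phi(d)/2$. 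At $n=6$ one computes $I(C_6,x)=(1+2x)(1+4x+x^2)=I(K_2,x)\cdot I(K_4-e,x)$, which already produces the extra graph in~(i).

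For (ii) with even $n\neq 6$ and (iii) with $n=p^k$ ($p\geq 5$ prime), the relevant $d$ are few and explicit, so the factorization can be written out directly. Case (iii) is the cleanest: the divisors of $2p^k$ not dividing $p^k$ are $\{2,2p,\dots,2p^k\}$; the divisor $d=2$ contributes the trivial factor $\alpha+\beta=1$, and the remaining $k$ factors are polynomials in $x$ of degrees $(p-1)/2,\,p(p-1)/2,\dots,p^{k-1}(p-1)/2$. One then considers every way of combining these factors into candidate polynomials $I(G_i,x)$ and shows that only the trivial combination (i.e.\ $k=1$) can be realized by a graph.

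The main obstacle is step~(b): certifying that each non-trivial candidate product is not $\prod I(G_i,x)$ for actual graphs. The necessary conditions I would use for a polynomial $p(x)=\sum c_i x^i$ to be an independence polynomial are $c_0=1$, $c_1=|V(H)|$, $c_2=\binom{|V(H)|}{2}-|E(H)|$, and $c_i\geq 0$ for all $i$; in addition, Proposition \ref{prop:delvert} gives a recursive non-negativity constraint, namely $p(x)-q(x)$ must equal $x$ times the independence polynomial of a smaller graph for some vertex-deletion candidate $q(x)$. Showing that these constraints eliminate every candidate factorization except the known ones, uniformly over all even $n\neq 6$ and all prime powers $n=p^k$ with $p\geq 5$, is the technical heart of the argument.
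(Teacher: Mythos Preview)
The paper does not prove Proposition~\ref{prop:oldcn}; it is quoted from Beaton, Brown and Cameron~\cite{beaton} as background, so there is no proof in the present paper to compare against.

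Assessing your proposal on its own: the setup is sound. The identity $I(C_n,x)=\alpha^n+\beta^n$ with $\alpha+\beta=1$, $\alpha\beta=-x$ holds, the cyclotomic factorization $\alpha^n+\beta^n=\prod_{d\mid 2n,\,d\nmid n}\Phi_d^{*}(\alpha,\beta)$ does give the irreducible $\mathbb{Z}[x]$-factors, and your computation for $n=6$ is correct. This is essentially the same factorization that the present paper derives (for odd $n$) via the explicit roots in Proposition~\ref{prop:cyclicroot} and the minimal polynomials of $\cos(2\pi k/n)$; your route through $\alpha^n+\beta^n$ is a bit slicker.

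However, what you have written is an outline, not a proof. You yourself flag step~(b)---ruling out every non-trivial grouping of the irreducible factors as a product of independence polynomials of connected graphs---as ``the technical heart of the argument,'' and then do not carry it out. The coefficient conditions you list ($c_0=1$, $c_2=\binom{c_1}{2}-|E|$, non-negativity, and a vague ``recursive non-negativity'' from Proposition~\ref{prop:delvert}) are necessary but nowhere near sufficient to be an independence polynomial, and you give no mechanism for verifying them uniformly over all even $n\neq 6$ or all prime powers $n=p^k$ with $p\geq 5$. In the argument of~\cite{beaton}, and in this paper's parallel treatment of the odd case, real structural input is needed at this point: one shows each irreducible factor is a \emph{unicyclic} polynomial (Proposition~\ref{prop:unicyclicf}), forces every connected component of $G$ to be a unicyclic graph (Proposition~\ref{prop:unicyclicsubgraph}), and then uses degree-sequence identities as in Propositions~\ref{prop:degreesum}--\ref{prop:maxdegree3} to constrain the components. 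Your proposal has no analogue of this step, so as it stands there is a genuine gap.
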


Beaton, Brown and Cameron~\cite{beaton} also made the following conjecture:

\begin{conj}
If $3\not\vert n$ and $n\geq 4$ is odd, then $G$ is independence equivalent to $C_n$ iff $G\in\{C_n,D_n\}$.
\end{conj}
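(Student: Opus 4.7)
The plan is to reduce the conjecture to a combinatorial claim about factorisations of $I(C_n,x)$ and then rule out the bad factorisations using the hypothesis $3\nmid n$. By Proposition~\ref{prop:connecteddn}, every connected member of $\mathcal{I}(C_n)$ is $C_n$ or $D_n$, so it suffices to show that no disconnected graph $G = G_1 \sqcup \cdots \sqcup G_m$ with $m \geq 2$ satisfies $I(G,x) = I(C_n,x)$. Such a $G$ would give $I(C_n,x) = \prod_i I(G_i,x)$ with each $I(G_i,x)\in\mathbb{Z}[x]$ a divisor of $I(C_n,x)$ having constant term $1$.

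The next step is to compute the factorisation of $I(C_n,x)$ over $\mathbb{Q}$. Proposition~\ref{prop:delvert} applied to a vertex of $C_n$ yields the recurrence $I(C_n,x) = I(C_{n-1},x) + xI(C_{n-2},x)$, whose explicit solution gives zeros $x_k = -\bigl[4\cos^2\!\bigl((2k+1)\pi/(2n)\bigr)\bigr]^{-1}$ for $k = 0, \dots, (n-3)/2$, all lying in $\mathbb{Q}(\cos(\pi/n))$. The Galois action of $(\mathbb{Z}/2n\mathbb{Z})^\times$ groups these roots by the value of $\gcd(2k+1,n)$, yielding the $\mathbb{Q}$-irreducible factorisation
\[
I(C_n,x) = \prod_{d\mid n,\,d>1}\Phi_d(x),
\]
with each $\Phi_d\in\mathbb{Z}[x]$ irreducible of degree $\phi(d)/2$ and $\Phi_d(0)=1$; in particular $\Phi_p = I(C_p)$ for every prime $p$, so $\Phi_3(x) = 1+3x = I(C_3,x)$. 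Since each $I(G_i,x)$ divides $I(C_n,x)$ in $\mathbb{Z}[x]$ with constant term $1$, one has $I(G_i,x) = \prod_{d\in S_i}\Phi_d(x)$ for a nonempty $S_i$, with $\{S_i\}_{i=1}^m$ a partition of $\{d : d\mid n,\, d>1\}$.

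The conjecture thus reduces to showing that for $3\nmid n$, no such partition with $m\geq 2$ has every $\prod_{d\in S_i}\Phi_d(x)$ equal to the independence polynomial of some graph. The hypothesis $3\nmid n$ enters through a sporadic coincidence absent under it: when $3\mid n$, both $\Phi_3 = I(C_3)$ and, for instance, $\Phi_9(x) = 1+6x+9x^2+3x^3$---the independence polynomial of $C_5$ with a pendant vertex---are independence polynomials, producing disconnected members such as $C_3 \sqcup (C_5 \text{ with a pendant})\in\mathcal{I}(C_9)$. When $3\nmid n$ the divisor $d=3$ does not occur, removing this source of sporadic decompositions.

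The main obstacle is to prove that for $3\nmid n$, every proper nonempty $S\subsetneq\{d\mid n : d>1\}$ gives a polynomial $\prod_{d\in S}\Phi_d(x)$ that is not the independence polynomial of any graph. The most delicate case is $S = \{p\}$ for a prime $p\geq 5$ dividing $n$: here $\Phi_p = I(C_p)$ is itself an independence polynomial, so one must instead show that the complementary factor $I(C_n,x)/I(C_p,x)$ is not. I would attack this by computing the coefficients of the quotient from the cyclotomic factorisation and violating the necessary bounds $0 \leq [x^k]I(H,x) \leq \binom{|V(H)|}{k}$ (where $|V(H)| = n-p$ is forced by the coefficient of $x$) that any independence polynomial must satisfy. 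Extending this uniformly over all proper $S$ and over all odd $n$ with $3\nmid n$ is where the substantial technical work will lie.
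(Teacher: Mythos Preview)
Your setup matches the paper exactly: the factorisation $I(C_n,x)=\prod_{d\mid n,\,d>1}\Phi_d(x)$ (the paper's $f_d$), the observation that a disconnected $G=\bigsqcup G_i$ forces each $I(G_i,x)$ to be a product over some block $S_i$ of a partition of $\{d\mid n:d>1\}$, and the remark that $\Phi_p=I(C_p,x)$ so some blocks are genuinely realisable. Where you diverge is in the endgame, and that is where the gap lies.

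Your plan is to show, for each proper $S$, that $\prod_{d\in S}\Phi_d$ is not the independence polynomial of any graph, using the bounds $0\le[x^k]I(H,x)\le\binom{|V(H)|}{k}$. These bounds are far too slack to succeed. For $n=pq$ with primes $5\le p<q$, the cofactor $I(C_n,x)/I(C_p,x)=\Phi_q\Phi_{pq}$ has degree $\tfrac{n-p}{2}$ and (as a divisor of $I(C_n,x)$ with the right sign pattern) will comfortably sit under $\binom{n-p}{k}$; there is no reason to expect a violation, and you give no mechanism for producing one. Even if some ad hoc inequality handled $|S|=1$, you would still need a uniform argument over all proper $S$ and all odd $n$ with $3\nmid n$, and you offer none.

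The paper avoids this entirely by inserting a structural step you are missing. It defines a polynomial $p(x)=\sum p_ix^i$ to be \emph{unicyclic} when $p_0=1$ and $p_2=\binom{p_1}{2}-p_1$, proves that each $\Phi_d$ is unicyclic, and shows this property is closed under products and quotients. Hence every $I(G_i,x)$ is unicyclic, so every $G_i$ is a unicyclic \emph{graph}. A short degree/triangle count (using $i_3$) then forces $\Delta(G)\le 3$ and, since $3\nmid n$ rules out $C_3$ as a component, every $G_i$ is a cycle or a $D_m$. Replacing $D_m$ by $C_m$, one has $I(C_n,x)=\prod_i I(C_{n_i},x)$ with the $n_i$ pairwise coprime (else a repeated root). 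But then $n_1n_2\mid n$, so $\Phi_{n_1n_2}$ divides $I(C_n,x)$ yet is absent from $\prod_i\prod_{d\mid n_i}\Phi_d$, a contradiction. This two-line finish is what the unicyclic reduction buys; without it you are left trying to exclude arbitrary graphs as components, which your coefficient bounds cannot do.
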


In the case where $n$ is an odd multiple of $3$, Oboudi~\cite{oboudi} observed for $n=9$, and Beaton, Brown and Cameron~\cite{beaton} observed for $n=15$, that there exist graphs other than $D_n$ which are independence equivalent to $C_n$. A computer search carried out by Beaton, Brown and Cameron~\cite{beaton} showed that these were the only examples up to $n=31$, and they raised the problem of determining $\mathcal{I}(C_n)$ for larger values of $n$ which are odd multiples of $3$.

In this paper, we will prove Beaton, Brown and Cameron's conjecture, and solve their problem, in the following theorem.
\begin{theorem}\label{mytheorem}
For odd $n\geq 3$,
\begin{enumerate}[label=(\roman*)]
    \item $\mathcal{I}(C_3)=\{C_3\}$,
    \item $\mathcal{I}(C_0)=\{C_9, D_9, C_3 \cup G_a, C_3 \cup G_b, C_3 \cup G_c, C_3 \cup G_d\}$ (see Figure~\ref{fig:c9}; the case $C_3 \cup G_d$ seems to have been omitted by previous writers~\cite{oboudi,beaton}),
    \item $\mathcal{I}(C_{15})=\{C_{15}, D_{15}, C_3 \cup C_5 \cup G^\prime_a, C_3 \cup D_5 \cup G^\prime_a, C_3 \cup C_5 \cup G^\prime_b, C_3 \cup D_5 \cup G^\prime_b, C_3 \cup C_5 \cup G^\prime_c, C_3 \cup D_5 \cup G^\prime_c\}$ (see Figure~\ref{fig:c15}),
    \item $\mathcal{I}(C_n)=\{C_n,D_n\}$ for odd $n\geq 5$, except for $n=9$ and $n=15$,
\end{enumerate}
\end{theorem}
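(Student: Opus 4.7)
I would first apply Proposition~\ref{prop:connecteddn} to reduce to disconnected graphs: every connected member of $\mathcal{I}(C_n)$ is $C_n$ or $D_n$, so only the disconnected $G=G_1\cup\cdots\cup G_k$ with $k\geq 2$ in $\mathcal{I}(C_n)$ remain to be classified. Since $I(C_n,x)=\prod_i I(G_i,x)$, reading off the degree and leading coefficient gives the strong constraints $\sum_i\alpha(G_i)=(n-1)/2$ and $\prod_i i_{\alpha(G_i)}(G_i)=n$. Part~(i) follows immediately: $I(C_3,x)=1+3x$ has degree $1$, so no disconnected $G$ can exist, and the only graph with independence polynomial $1+3x$ is $C_3$ itself.

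For parts (ii)--(iv) the plan is to exploit the explicit factorisation of $I(C_n,x)$ over $\mathbb{Q}[x]$. Proposition~\ref{prop:delvert} applied to $C_n$ produces the Lucas-type identity $I(C_n,x)=I(P_{n-1},x)+xI(P_{n-3},x)$, and the standard Chebyshev substitution shows that $I(C_n,x)$ splits as a product of $\mathbb{Q}$-irreducible factors $\psi_d(x)$ indexed by divisors $d\mid n$ with $d\geq 3$, where $\deg\psi_d=\varphi(d)/2$; in particular $\psi_3(x)=1+3x=I(C_3,x)$, which divides $I(C_n,x)$ precisely when $3\mid n$. When $\gcd(n,3)=1$, no $G_i$ can be $C_3$ (since $1+3x$ fails to divide $I(C_n,x)$), and I would then use vertex and edge counts against the remaining irreducible factors $\psi_d$ with $d\geq 5$ (all of degree $\geq 2$) to rule out every nontrivial decomposition, establishing~(iv) in the case $\gcd(n,3)=1$ and generalising Proposition~\ref{prop:oldcn}(iii). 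When $3\mid n$ and $n$ is odd, any disconnected $G\in\mathcal{I}(C_n)$ must contain at least one $C_3$ component; writing $G=C_3\cup H$ reduces the problem to classifying graphs $H$ with $I(H,x)=I(C_n,x)/(1+3x)$, and for $n=9$ and $n=15$ this quotient is of small enough degree to be handled by direct enumeration, yielding (ii) and (iii) (with the $C_5/D_5$ alternation in (iii) coming from Proposition~\ref{prop:cndn}).

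The main obstacle, and where I expect the technical heart of the proof to lie, is completing part~(iv) for odd multiples of $3$ with $n\geq 21$. Here the quotient $I(C_n,x)/(1+3x)$ has degree at least $9$ and no obvious a~priori obstruction against being realised as a graph's independence polynomial. My plan would be a strong induction on $n$: if the residual $H$ is disconnected, its own components satisfy $\sum_j\alpha(H_j)=(n-3)/2$ and $\prod_j i_{\alpha(H_j)}(H_j)=n/3$, and a careful case split on whether $9\mid n$ should either produce a further $C_3$ component (reducing $n$ to a smaller odd multiple of $3$ handled by the inductive hypothesis) or an untenable subproduct of the $\psi_d$'s with $d\geq 5$, which I would rule out by coefficient inequalities (e.g.\ failure of log-concavity) or by evaluation at $x=-1/3$. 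If $H$ is connected, Propositions~\ref{prop:delvert} and~\ref{prop:deledge} applied at a well-chosen vertex or edge should force a structural contradiction with the coefficients of $I(C_n,x)/(1+3x)$. Making all these exclusions rigorous for every $n\geq 21$, without recourse to a computer search, will be the demanding part.
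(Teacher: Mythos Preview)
Your plan shares the paper's opening moves---reduce to disconnected $G$ via Proposition~\ref{prop:connecteddn}, factor $I(C_n,x)=\prod_{d\mid n}\psi_d(x)$ (the paper calls these $f_d$), and observe that $\psi_3=1+3x$ divides $I(C_n,x)$ iff $3\mid n$---but it diverges at the decisive structural step and leaves a genuine gap there.

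The paper does \emph{not} proceed by writing $G=C_3\cup H$ and then attacking the quotient polynomial $I(C_n,x)/(1+3x)$ directly. Instead it first proves that every irreducible factor $f_d$ is a \emph{unicyclic polynomial}, and hence (Proposition~\ref{prop:unicyclicsubgraph}) every connected component $G_i$ of $G$ is a unicyclic graph. Combining this with the degree-sum identities (Propositions~\ref{prop:degreesum}--\ref{prop:maxdegree3}) forces $\Delta(G)\le 3$ and $n_G(C_3)=g_3$, which in turn pins each component down to $C_m$, $D_m$, or one of three explicit parametrised families $E_{m_1,m_2}$, $A_{m_1,m_2}$, $B_{m_1,m_2,m_3}$. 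Only after this finite taxonomy is in hand does the paper compare coefficients: an inclusion--exclusion formula for $i_4(G)$ (Lemma~\ref{lem:i4}) fixes almost all parameters, and the leading and next-to-leading coefficients finish the job, leaving only $n=9$ and $n=15$.

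Your plan lacks an analogue of this component classification. Two specific problems:
\begin{itemize}
\item You assert that when $3\mid n$ any disconnected $G\in\mathcal{I}(C_n)$ ``must contain at least one $C_3$ component''. This is true, but it is a \emph{consequence} of the unicyclic-component result (Proposition~\ref{prop:unioncn} plus the argument of Proposition~\ref{prop:notmult3}), not something available for free from the factorisation alone. Without knowing that the non-$C_3$ components are forced to be $C_m$ or $D_m$, you cannot conclude that a $C_3$ must appear.
\item Your proposed induction on $n$ for the $n\ge 21$ case does not have a working reduction step: $I(C_n,x)/(1+3x)$ is not $I(C_m,x)$ for any $m$, so splitting off a further $C_3$ does not land you back in the inductive hypothesis. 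The suggested fallbacks (log-concavity, evaluation at $-1/3$, ``structural contradiction'' from Propositions~\ref{prop:delvert}--\ref{prop:deledge}) are not specific enough to carry the argument; the paper needs the explicit $A/B/E$ classification and the $i_4$ computation precisely because coarser invariants do not separate the cases.
\end{itemize}
In short, the missing idea is the unicyclic-component lemma and the resulting finite list of admissible component shapes; once you have that, the coefficient comparisons you mention become tractable, but without it the ``demanding part'' you anticipate has no clear line of attack.
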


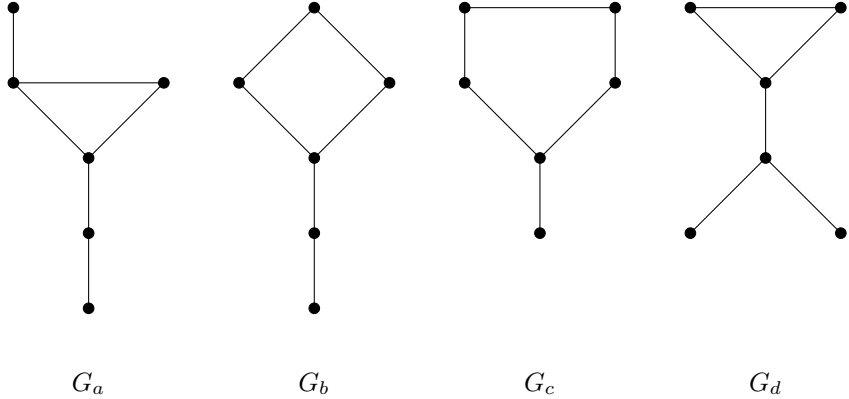
\begin{figure}
\centering
\begin{tikzpicture}
\filldraw(0,4) circle[radius=2pt];
\filldraw(0,3) circle[radius=2pt];
\filldraw(2,3) circle[radius=2pt];
\filldraw(1,2) circle[radius=2pt];
\filldraw(1,1) circle[radius=2pt];
\filldraw(1,0) circle[radius=2pt];
\draw(0,4)--(0,3);
\draw(0,3)--(2,3);
\draw(0,3)--(1,2);
\draw(2,3)--(1,2);
\draw(1,2)--(1,1);
\draw(1,1)--(1,0);
\node at (1,-1) {$G_a$};

\filldraw(4,4) circle[radius=2pt];
\filldraw(3,3) circle[radius=2pt];
\filldraw(5,3) circle[radius=2pt];
\filldraw(4,2) circle[radius=2pt];
\filldraw(4,1) circle[radius=2pt];
\filldraw(4,0) circle[radius=2pt];
\draw(4,4)--(3,3);
\draw(4,4)--(5,3);
\draw(3,3)--(4,2);
\draw(5,3)--(4,2);
\draw(4,2)--(4,1);
\draw(4,1)--(4,0);
\node at (4,-1) {$G_b$};

\filldraw(6,4) circle[radius=2pt];
\filldraw(8,4) circle[radius=2pt];
\filldraw(6,3) circle[radius=2pt];
\filldraw(8,3) circle[radius=2pt];
\filldraw(7,2) circle[radius=2pt];
\filldraw(7,1) circle[radius=2pt];
\draw(6,4)--(8,4);
\draw(6,4)--(6,3);
\draw(8,4)--(8,3);
\draw(6,3)--(7,2);
\draw(8,3)--(7,2);
\draw(7,2)--(7,1);
\node at (7,-1) {$G_c$};

\filldraw(9,4) circle[radius=2pt];
\filldraw(11,4) circle[radius=2pt];
\filldraw(10,3) circle[radius=2pt];
\filldraw(10,2) circle[radius=2pt];
\filldraw(9,1) circle[radius=2pt];
\filldraw(11,1) circle[radius=2pt];
\draw(9,4)--(11,4);
\draw(9,4)--(10,3);
\draw(11,4)--(10,3);
\draw(10,3)--(10,2);
\draw(10,2)--(9,1);
\draw(10,2)--(11,1);
\node at (10,-1) {$G_d$};
\end{tikzpicture}
\caption{Graphs $G_a$, $G_b$, $G_c$ and $G_d$ in Theorem~\ref{mytheorem}}\label{fig:c9}
\end{figure}

\begin{figure}
\centering
\begin{tikzpicture}
\filldraw(0,4) circle[radius=2pt];
\filldraw(0,3) circle[radius=2pt];
\filldraw(2,3) circle[radius=2pt];
\filldraw(1,2) circle[radius=2pt];
\filldraw(1,1) circle[radius=2pt];
\filldraw(1,0) circle[radius=2pt];
\filldraw(1,-1) circle[radius=2pt];
\draw(0,4)--(0,3);
\draw(0,3)--(2,3);
\draw(0,3)--(1,2);
\draw(2,3)--(1,2);
\draw(1,2)--(1,1);
\draw(1,1)--(1,0);
\draw(1,0)--(1,-1);
\node at (1,-2) {$G^\prime_a$};

\filldraw(5,4) circle[radius=2pt];
\filldraw(4,3) circle[radius=2pt];
\filldraw(6,3) circle[radius=2pt];
\filldraw(5,2) circle[radius=2pt];
\filldraw(5,1) circle[radius=2pt];
\filldraw(5,0) circle[radius=2pt];
\filldraw(5,-1) circle[radius=2pt];
\draw(5,4)--(4,3);
\draw(5,4)--(6,3);
\draw(4,3)--(5,2);
\draw(6,3)--(5,2);
\draw(5,2)--(5,1);
\draw(5,1)--(5,0);
\draw(5,0)--(5,-1);
\node at (5,-2) {$G^\prime_b$};

\filldraw(9,4) circle[radius=2pt];
\filldraw(8,3) circle[radius=2pt];
\filldraw(10,3) circle[radius=2pt];
\filldraw(8,2) circle[radius=2pt];
\filldraw(10,2) circle[radius=2pt];
\filldraw(9,1) circle[radius=2pt];
\filldraw(9,0) circle[radius=2pt];
\draw(9,4)--(8,3);
\draw(9,4)--(10,3);
\draw(8,3)--(8,2);
\draw(10,3)--(10,2);
\draw(8,2)--(9,1);
\draw(10,2)--(9,1);
\draw(9,1)--(9,0);
\node at (9,-2) {$G^\prime_c$};
\end{tikzpicture}
\caption{Graphs $G^\prime_a$, $G^\prime_b$ and $G^\prime_c$ in Theorem~\ref{mytheorem}}\label{fig:c15}
\end{figure}

Proposition~\ref{prop:oldcn} and Thereom~\ref{mytheorem} together give us a complete characterisation of $\mathcal{I}(C_n)$ for all $n\geq 3$.

The proof of Theorem~\ref{mytheorem} will proceed as follows. Since Proposition~\ref{prop:connecteddn} completely characterises the connected graphs which are independence equivalent to $C_n$, we need only consider disconnected graphs. From the definition of the independence polynomial, it is clear that if $G$ is a disconnected graph with connected components $G_1,\ldots,G_r$, then
$$I(G,x)=\prod_{i=1}^r I(G_i,x).$$
In other words, the independence polynomial of a graph $G$ is the product of the independence polynomials of the connected components of $G$.

In Section~\ref{sec:factor}, we show how to factorise $I(C_n,x)$ over $\mathbb{Z}[x]$ for odd $n$, and investigate properties of the factors of $I(C_n,x)$. This in turn enables us, in Section~\ref{sec:structure}, to determine which graphs can be connected components of a graph $G\in\mathcal{I}(C_n)$. The case where $n$ is not a multiple of $3$ is addressed in Section~\ref{sec:notmult3}, where it is shown that $\mathcal{I}(C_n)=\{C_n,D_n\}$.

The more difficult case where $n$ is a multiple of $3$ is addressed in Section~\ref{sec:mult3}, where we seek graphs in $\mathcal{I}(C_n)$ that are not isomorphic to $C_n$ or $D_n$. Such graphs must be disconnected, and we show that one of the connected components must be $C_3$, and the other components must either be cycle graphs, or belong one of the families of graphs in Figure~\ref{fig:Gr}. Further consideration of the degree and coefficients of $I(C_n,x)$ narrows down the possible graphs to those listed in the statement of Theorem~\ref{mytheorem}.

\section{Factorisation of the Independence Polynomial of $C_n$}\label{sec:factor}
In this section, we factorise $I(C_n,x)$ over $\mathbb{Z}[x]$ to investigate properties of the factors.

The roots of the independence polynomials of cyclic graphs have been completely determined by Alikhani and Peng~\cite{alikhani}.
\begin{prop}\cite{alikhani}\label{prop:cyclicroot}
The roots of $I(C_n,x)$ are
$$c_i = -\frac{1}{2+2\cos\left(\frac{(2i-1)\pi}{n}\right)}$$
for $i = 1,2,\ldots,\left\lfloor\frac{n}{2}\right\rfloor$.
\end{prop}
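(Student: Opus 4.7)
The plan is to derive a closed form for $I(C_n,x)$ via a trigonometric substitution, and then read off the roots directly. The first step is to express $I(C_n,x)$ in terms of path independence polynomials: applying Proposition~\ref{prop:delvert} at any vertex $u$ of $C_n$ and noting that $C_n - u = P_{n-1}$ and $C_n - N[u] = P_{n-3}$, one obtains
\begin{equation*}
I(C_n, x) = I(P_{n-1}, x) + x\, I(P_{n-3}, x).
\end{equation*}
This reduces the question to computing the independence polynomial of paths.

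The path polynomial satisfies the linear recurrence $I(P_n, x) = I(P_{n-1}, x) + x\, I(P_{n-2}, x)$ (again from Proposition~\ref{prop:delvert}, applied at an endpoint) with $I(P_0,x)=1$ and $I(P_1,x)=1+x$. Solving in terms of the roots $t_\pm$ of $t^2 - t - x$ gives $I(P_n,x) = (t_+^{n+2} - t_-^{n+2})/(t_+ - t_-)$. I would then introduce the substitution $x = -1/(4\cos^2\phi)$, under which $1 + 4x = -\tan^2\phi$ and $t_\pm = e^{\pm i\phi}/(2\cos\phi)$, so that
\begin{equation*}
I(P_n, x) = \frac{\sin((n+2)\phi)}{(2\cos\phi)^{n+1}\sin\phi}.
\end{equation*}

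Substituting these two path formulas into the cycle identity and applying the product-to-sum identity $\sin((n+1)\phi) - \sin((n-1)\phi) = 2\cos(n\phi)\sin\phi$ should collapse everything to
\begin{equation*}
I(C_n, x) = \frac{2\cos(n\phi)}{(2\cos\phi)^n}.
\end{equation*}
Since the denominator is nonzero for $\phi \in (0,\pi/2)$, the roots in $x$ correspond exactly to the zeros $\phi_i = (2i-1)\pi/(2n)$ of $\cos(n\phi)$; undoing the substitution via $4\cos^2\phi = 2+2\cos(2\phi)$ produces the claimed $c_i$.

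What remains is a distinctness and counting check. I would verify that as $i$ ranges over $1, 2, \ldots, \lfloor n/2 \rfloor$, the numbers $(2i-1)\pi/n$ lie in the open interval $(0,\pi)$ on which $\cos$ is injective, so the $c_i$ are pairwise distinct, and that the exceptional index with $2i-1=n$ (possible only when $n$ is odd, where it would force $\cos = -1$ and a vanishing denominator in $c_i$) is correctly excluded from this range. Since $\deg I(C_n,x) = \lfloor n/2 \rfloor$ matches the number of distinct $c_i$ produced, these must be all the roots. I don't anticipate a serious obstacle; the main care is in justifying the substitution globally (it is a formal identity of polynomials in $x$, verified by matching degrees and enough values) and in the boundary bookkeeping for the index range.
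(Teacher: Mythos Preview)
The paper does not give its own proof of this proposition: it is quoted from Alikhani and Peng~\cite{alikhani} and stated without argument. So there is nothing to compare against on the paper's side.

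Your proposed proof is correct and complete. The reduction $I(C_n,x)=I(P_{n-1},x)+xI(P_{n-3},x)$, the closed form $I(P_n,x)=(t_+^{n+2}-t_-^{n+2})/(t_+-t_-)$, the substitution $x=-1/(4\cos^2\phi)$ giving $t_\pm=e^{\pm i\phi}/(2\cos\phi)$, and the collapse to $I(C_n,x)=2\cos(n\phi)/(2\cos\phi)^n$ all check out line by line; the trigonometric identity you invoke is exactly $\sin((n+1)\phi)-\sin((n-1)\phi)=2\cos(n\phi)\sin\phi$. Your distinctness argument is also right: for $1\le i\le\lfloor n/2\rfloor$ the arguments $(2i-1)\pi/n$ lie strictly in $(0,\pi)$, on which $\cos$ is injective, and the degenerate value $2i-1=n$ is outside this range. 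Since the number of roots produced equals $\deg I(C_n,x)=\lfloor n/2\rfloor$, you have them all.

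The only cosmetic point is your closing caveat about ``justifying the substitution globally'': once you have the identity $I(C_n,x)(2\cos\phi)^n=2\cos(n\phi)$ for all $\phi$ in an interval, both sides are analytic in $\phi$ and the map $\phi\mapsto x$ covers $(-\infty,-1/4)$, which already contains infinitely many points, so the polynomial identity in $x$ follows immediately. No further bookkeeping is needed.
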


Now, the minimal polynomials of $\cos(2\pi k / n)$ have been previously determined by Lehmer~\cite{lehmer} and Watkins and Zeitlin~\cite{watkins}.
\begin{prop}\cite{lehmer}\label{prop:cosroot1}
If $\gcd(k,n)=1$ and $n\geq 3$ then the minimal polynomial of $2\cos(2\pi k / n)$ has degree $\phi(n)/2$ and leading coefficient $1$, where $\phi(n)$ is Euler's totient function.
\end{prop}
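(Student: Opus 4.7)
The plan is to derive this from standard facts about cyclotomic field extensions. Since $\gcd(k,n)=1$, the number $\zeta = e^{2\pi i k/n}$ is a primitive $n$-th root of unity, and $2\cos(2\pi k/n) = \zeta + \zeta^{-1}$. The strategy is to identify $\mathbb{Q}(2\cos(2\pi k/n))$ with the maximal real subfield of the cyclotomic field $\mathbb{Q}(\zeta)$ and compute its degree by a short tower argument, then separately argue that $\zeta+\zeta^{-1}$ is an algebraic integer.

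First I would invoke the standard fact that the $n$-th cyclotomic polynomial $\Phi_n(x) \in \mathbb{Z}[x]$ is monic of degree $\phi(n)$ and is the minimal polynomial of $\zeta$ over $\mathbb{Q}$; hence $[\mathbb{Q}(\zeta):\mathbb{Q}] = \phi(n)$. Next, observe that $\zeta$ is a root of $x^2 - (\zeta+\zeta^{-1})x + 1 \in \mathbb{Q}(\zeta+\zeta^{-1})[x]$, so $[\mathbb{Q}(\zeta):\mathbb{Q}(\zeta+\zeta^{-1})] \leq 2$. For $n \geq 3$ the primitive $n$-th roots of unity are not real, so $\zeta \notin \mathbb{R} \supseteq \mathbb{Q}(\zeta+\zeta^{-1})$; therefore this tower has degree exactly $2$ on top, and by multiplicativity $[\mathbb{Q}(\zeta+\zeta^{-1}):\mathbb{Q}] = \phi(n)/2$. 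This gives the stated degree of the minimal polynomial of $2\cos(2\pi k/n)$.

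For the leading coefficient, I would note that $\zeta$ is an algebraic integer (being a root of $x^n - 1$), and so is $\zeta^{-1}$. Since the algebraic integers form a subring of $\mathbb{C}$, the sum $\zeta + \zeta^{-1} = 2\cos(2\pi k/n)$ is an algebraic integer, so its minimal polynomial over $\mathbb{Q}$ is monic (and in fact lies in $\mathbb{Z}[x]$).

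There is no real obstacle in this argument; the only point requiring care is the role of the hypothesis $n \geq 3$, which is precisely what guarantees that complex conjugation restricts to a nontrivial automorphism of $\mathbb{Q}(\zeta)$, equivalently that $\zeta \notin \mathbb{R}$. Without this, the index $[\mathbb{Q}(\zeta):\mathbb{Q}(\zeta+\zeta^{-1})]$ collapses to $1$ and the formula $\phi(n)/2$ fails to be an integer anyway.
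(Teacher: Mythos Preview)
Your argument is correct. The identification of $2\cos(2\pi k/n)$ with $\zeta+\zeta^{-1}$ for a primitive $n$-th root of unity $\zeta$, the tower $\mathbb{Q}\subseteq\mathbb{Q}(\zeta+\zeta^{-1})\subseteq\mathbb{Q}(\zeta)$ with top degree exactly~$2$ for $n\geq 3$, and the observation that a sum of algebraic integers is an algebraic integer (hence has monic minimal polynomial in $\mathbb{Z}[x]$) together give exactly the claimed degree $\phi(n)/2$ and leading coefficient~$1$.

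As for comparison with the paper: there is nothing to compare. The paper does not supply a proof of this proposition; it simply quotes it as a known result attributed to Lehmer and then uses it as a black box in the proof of Proposition~\ref{prop:cyclicdeg}. Your self-contained cyclotomic-field argument is the standard modern way to see why Lehmer's statement holds, and it would serve perfectly well as a proof here.
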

Watkins and Zeitlin~\cite{watkins} give an explicit construction for this minimal polynomial in terms of Chebychev polynomials. 
\begin{prop}\cite{watkins}\label{prop:cosroot2}
The roots of the minimal polynomial of $2\cos(2\pi k / n)$ are precisely those values of $2\cos(2\pi k / n)$ for which $\gcd(k,n)=1$ and $1\leq k<n/2$.
\end{prop}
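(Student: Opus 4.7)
The plan is to use the Galois theory of cyclotomic fields, taking Proposition~\ref{prop:cosroot1} as given. Write $\zeta = e^{2\pi i/n}$ and set $\alpha_k = 2\cos(2\pi k/n) = \zeta^k + \zeta^{-k}$; when $\gcd(k,n)=1$, the power $\zeta^k$ is itself a primitive $n$-th root of unity, so $\alpha_k$ lies in the maximal real subfield $K = \mathbb{Q}(\zeta + \zeta^{-1})$ of the cyclotomic field $\mathbb{Q}(\zeta)$.

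The roots of the minimal polynomial of $\alpha_k$ over $\mathbb{Q}$ are exactly the distinct Galois conjugates of $\alpha_k$, so I would next enumerate these. The Galois group $\mathrm{Gal}(\mathbb{Q}(\zeta)/\mathbb{Q}) \cong (\mathbb{Z}/n\mathbb{Z})^\times$ acts by $\sigma_a:\zeta \mapsto \zeta^a$, giving
$$\sigma_a(\alpha_k) = \zeta^{ak} + \zeta^{-ak} = \alpha_{ak \bmod n}.$$
As $a$ ranges over $(\mathbb{Z}/n\mathbb{Z})^\times$, so does $ak \bmod n$ (because $\gcd(k,n)=1$), so the full Galois orbit of $\alpha_k$ is $\{\alpha_j : 1\leq j\leq n-1,\ \gcd(j,n)=1\}$.

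Finally, I would use the identity $\alpha_j = \alpha_{n-j}$ (which follows from $\zeta^{n-j} = \zeta^{-j}$) to pair up elements of the orbit: each value in the orbit is hit twice, so restricting to $1\leq j < n/2$ leaves exactly $\phi(n)/2$ distinct Galois conjugates. By Proposition~\ref{prop:cosroot1} the minimal polynomial already has degree $\phi(n)/2$, so these $\phi(n)/2$ values must be precisely its roots.

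The only real obstacle is the bookkeeping of the pairing $j \leftrightarrow n-j$: one has to check that no $j$ with $\gcd(j,n)=1$ is a fixed point, which would require $j = n/2$ and hence $n/2 \mid \gcd(j,n)$, impossible for $n\geq 3$. Thus the pairing is always clean, and the restriction $1\leq j < n/2$ picks exactly one representative from each pair, completing the argument. Watkins and Zeitlin's explicit Chebyshev-polynomial construction would be needed only if one wanted a formula for the minimal polynomial itself; the description of its root set follows from the Galois computation alone.
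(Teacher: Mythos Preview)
Your argument is correct. Note, however, that the paper does not supply its own proof of this proposition: it is quoted directly from Watkins and Zeitlin~\cite{watkins} as a known result, with only the remark that those authors give an explicit construction of the minimal polynomial via Chebyshev polynomials. So there is no ``paper's proof'' to compare against.

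That said, your route is genuinely different from the cited source. Watkins and Zeitlin work constructively: they write down a polynomial built from Chebyshev polynomials, verify that each $2\cos(2\pi k/n)$ with $\gcd(k,n)=1$ is a root, and then appeal to the degree count to conclude irreducibility. You instead run the Galois-theoretic argument directly---computing the orbit of $\alpha_k=\zeta^k+\zeta^{-k}$ under $(\mathbb{Z}/n\mathbb{Z})^\times$, collapsing by the involution $j\leftrightarrow n-j$, and matching the resulting $\phi(n)/2$ values against the degree supplied by Proposition~\ref{prop:cosroot1}. Your approach is cleaner if one only wants the \emph{set} of roots (which is all the present paper uses), while the Chebyshev construction has the advantage of producing an explicit formula for the minimal polynomial itself. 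Your final paragraph already identifies this trade-off accurately.
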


We are now in a position to find the minimal polynomials of $c_i$.

\begin{prop}\label{prop:cyclicdeg}
Let $n\geq 3$ be an odd integer. For positive integers $i$ where $\gcd(2i-1,n)=1$ and $1\leq i \leq \left\lfloor\frac{n}{2}\right\rfloor$, let $f(x)$ be the minimal polynomial of $c_i$ over $\mathbb{Z}[x]$. For positive integers $k$ where $\gcd(k,n)=1$ and $1\leq k<n/2$, let $g(x)$ be the minimal polynomial of $2\cos((2\pi k) / (2n))$ over $\mathbb{Z}[x]$. Then $f(x)$ and $g(x)$ are of the same degree $d=\phi(n)/2$. Furthermore, $f(x)\vert I(C_n,x)$.
\end{prop}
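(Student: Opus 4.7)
My approach is to pass from $c_i$ to the trigonometric value $r_i:=2\cos((2i-1)\pi/n)$ via the rational relation $c_i=-1/(2+r_i)$, use this to transport degree information from Propositions~\ref{prop:cosroot1} and~\ref{prop:cosroot2}, and then deduce the divisibility through Gauss's lemma. The degree of $g$ is immediate: writing $2\cos(2\pi k/(2n))$ and applying Proposition~\ref{prop:cosroot1} with $n$ replaced by $2n$ gives $\deg g=\phi(2n)/2$, and since $n$ is odd we have $\phi(2n)=\phi(n)$, so $\deg g=\phi(n)/2$.

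To show $\deg f=\deg g$, I would establish $\mathbb{Q}(c_i)=\mathbb{Q}(r_i)$. Because $(2i-1)\pi/n\in(0,\pi)$, we have $r_i\in(-2,2)$, so $2+r_i\ne 0$, giving $c_i=-1/(2+r_i)\in\mathbb{Q}(r_i)$; and since $c_i\ne 0$, rearranging yields $r_i=-(1+2c_i)/c_i\in\mathbb{Q}(c_i)$. Hence the two fields agree. The hypothesis $\gcd(2i-1,n)=1$, together with $2i-1$ being odd and $n$ being odd, forces $\gcd(2i-1,2n)=1$, so Proposition~\ref{prop:cosroot1} applied with $n\to 2n$ gives $[\mathbb{Q}(r_i):\mathbb{Q}]=\phi(2n)/2=\phi(n)/2$. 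Therefore $\deg f=[\mathbb{Q}(c_i):\mathbb{Q}]=\phi(n)/2=\deg g$.

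For $f(x)\mid I(C_n,x)$, I would use Proposition~\ref{prop:cyclicroot} to conclude that $c_i$ is a root of $I(C_n,x)\in\mathbb{Z}[x]$, whence the monic minimal polynomial of $c_i$ over $\mathbb{Q}$ divides $I(C_n,x)$ in $\mathbb{Q}[x]$. Since $f(x)$ is a primitive integer polynomial differing from this monic polynomial only by a rational scalar, Gauss's lemma upgrades the divisibility to $\mathbb{Z}[x]$. The main obstacle is purely clerical: checking that $\gcd(2i-1,2n)=1$ so that Proposition~\ref{prop:cosroot1} is applicable with $2n$, and verifying that the M\"obius-type substitution $r_i\mapsto-1/(2+r_i)$ is well defined at the relevant $r_i$. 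There is no genuinely hard step; the argument is essentially a rational change of variable layered on top of Propositions~\ref{prop:cosroot1} and~\ref{prop:cosroot2}.
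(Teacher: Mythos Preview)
Your proof is correct and follows essentially the same idea as the paper's: both exploit the M\"obius relation $c_i=-1/(2+r_i)$ with $r_i=2\cos\bigl((2i-1)\pi/n\bigr)$ to transfer the degree computation from Proposition~\ref{prop:cosroot1} (applied with $2n$ in place of $n$). The paper carries this out by explicitly constructing $f$ from $g$ via the substitution $x\mapsto x-2$ followed by a reciprocal transformation, whereas you argue more directly via the field equality $\mathbb{Q}(c_i)=\mathbb{Q}(r_i)$; your invocation of Gauss's lemma for the divisibility $f\mid I(C_n,x)$ is likewise a cleaner packaging of what the paper does by matching roots.
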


\begin{proof}
Let
$$x_i = 2\cos\left(\frac{2\pi(2i-1)}{2n}\right) = -\frac{2c_i+1}{c_i}.$$
Since $n$ is odd, $\gcd(2i-1,n)=1$ iff $\gcd(2i-1,2n)=1$. Therefore, the set of values taken by $x_i$ as $i$ varies is precisely those of $2\cos((2\pi k) / (2n))$ as $k$ varies.
\begin{eqnarray*}
& & \left\{x_i : \gcd(2i-1,n)=1\textrm{ and }1\leq i \leq \left\lfloor\frac{n}{2}\right\rfloor\right\} \\
& = & \left\{2\cos\left(\frac{2\pi k}{2n}\right) : \gcd(k,n)=1\textrm{ and }1\leq k<\frac{n}{2}\right\}
\end{eqnarray*}
From Proposition~\ref{prop:cosroot1}, $g(x)$ has degree $\phi(2n)/2=\phi(n)/2$ and leading coefficient 1. From Proposition~\ref{prop:cosroot2}, we now have
$$g(x)=\prod_{\substack{\gcd(2i-1,n)=1,\\1\leq i \leq \left\lfloor\frac{n}{2}\right\rfloor}} (x-x_i)$$
Since we know that $g(x)$ is an irreducible polynomial with integer coefficients and degree $d$, let
$$g(x) = \sum_{t=0}^d a_t x^t.$$
Now, we can translate $g(x)$ along the $x$-axis to obtain another irreducible polynomial $h(x)$ with integer coefficients and degree $d$, whose roots are $x_i + 2$.
$$g(x-2) = \sum_{t=0}^d a_t (x-2)^t = \sum_{t=0}^d b_t x^t = h(x).$$
Since $x_i + 2 = -1/c_i$,
$$0 = \sum_{t=0}^d a_t x_i^t = \sum_{t=0}^d b_t (x_i + 2)^t = \sum_{t=0}^d b_t \left(-\frac{1}{c_i}\right)^t = \left(-\frac{1}{c_i}\right)^d \sum_{t=0}^d b_t (-c_i)^{d-t}.$$
and therefore the $c_i$ are roots of a polynomial
$$ f(x) = \sum_{t=0}^d b_t (-x)^{d-t}. $$
Since $h(x)$ is irreducible, $f(x)$ is irreducible as well, and has degree $d$ and integer coefficients.

The roots of $f(x)$ are precisely the values of $c_i$ where $\gcd(2i-1,n)=1$ and $1\leq i \leq \left\lfloor\frac{n}{2}\right\rfloor$. As all these values of $c_i$ are also roots of $I(C_n,x)$, we see that $f(x)\vert I(C_n,x)$.
\end{proof}

We can therefore define a sequence of polynomials $f_n(x)$ for odd positive integers $n$.

\begin{definition}\label{def:fn}
For an odd positive integer $n$, the polynomial $f_n(x)$ is defined to be
$$ f(x) = \begin{cases}
  1 & n=1,\\
  k_n\displaystyle\prod_{\substack{{\gcd(2i-1,n)=1,}\\1{\leq i \leq \left\lfloor\frac{n}{2}\right\rfloor}}}(x-c_i) & \textrm{odd }n\geq 1,
\end{cases}
$$
where $k_n$ is an appropriate constant to make the coefficients of $f_n$ integers whose greatest common divisor is 1.\end{definition}
Then $f_n(x)$ is the minimal polynomial over $\mathbb{Z}[x]$ of $c_i$ where $\gcd(2i-1,n)=1$ and $1\leq i \leq \left\lfloor\frac{n}{2}\right\rfloor$ for odd $n>1$. The following proposition then follows immediately.
\begin{prop}\label{prop:cyclicfactor}
For odd $n>1$,
$$I(C_n,x)=\prod_{m\vert n} f_m(x).$$
\end{prop}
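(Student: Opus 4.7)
The plan is to match up, root by root, the zeros of $I(C_n,x)$ with the zeros of $\prod_{m\mid n} f_m(x)$, then fix the constant of proportionality by an integrality argument.

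First, I would set up a bijection between indices. Given a root $c_i=-1/(2+2\cos((2i-1)\pi/n))$ of $I(C_n,x)$ with $1\leq i\leq (n-1)/2$, let $d=\gcd(2i-1,n)$ and $m=n/d$; since $n$ is odd, so are $d$ and $m$, and writing $2i-1=d(2j-1)$ yields an integer $j$ with $1\leq j\leq (m-1)/2$ and $\gcd(2j-1,m)=1$. Substituting gives $c_i=-1/(2+2\cos((2j-1)\pi/m))$, which by Definition~\ref{def:fn} is a root of $f_m(x)$. Running this correspondence in reverse (set $2i-1=(n/m)(2j-1)$) shows that every root of $f_m(x)$, for any divisor $m>1$ of $n$, arises as some $c_i$.

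Second, I would check the degrees agree: $\deg I(C_n,x)=(n-1)/2$, while by Proposition~\ref{prop:cyclicdeg} and the conventions in Definition~\ref{def:fn},
$$\sum_{m\mid n,\, m>1}\deg f_m \;=\; \sum_{m\mid n,\, m>1}\frac{\phi(m)}{2} \;=\; \frac{n-1}{2},$$
using $\sum_{m\mid n}\phi(m)=n$ and $\phi(1)=1$. Since the $(n-1)/2$ roots $c_i$ are pairwise distinct, this root matching forces $I(C_n,x)=c\prod_{m\mid n}f_m(x)$ for some constant $c\in\mathbb{Q}$.

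Finally, to show $c=1$, I would invoke Gauss's lemma: each $f_m(x)$ is primitive by the choice of $k_m$ in Definition~\ref{def:fn}, so $\prod_{m\mid n}f_m(x)$ is primitive; since $I(C_n,x)$ has constant term $1$ and is therefore also primitive, we get $c=\pm 1$, and positivity of the leading coefficients (under the standard convention $k_m>0$) pins down $c=1$. I expect no real obstacle here; as the paper hints, the proposition follows ``immediately'' from Proposition~\ref{prop:cyclicdeg}, and the only mildly delicate step is the constant determination via primitivity.
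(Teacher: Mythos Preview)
Your argument is correct and is exactly the natural way to unpack the paper's ``follows immediately'': partition the roots $c_i$ of $I(C_n,x)$ according to $m=n/\gcd(2i-1,n)$, use Proposition~\ref{prop:cyclicdeg} together with $\sum_{m\mid n}\phi(m)=n$ to match degrees, and then invoke primitivity (Gauss) to pin down the constant. The only point the paper leaves entirely implicit is the sign of $k_m$; your remark that evaluating at $x=0$ (where $I(C_n,0)=1$ and each $f_m(0)$ is a positive integer under the convention $k_m>0$) settles $c=1$ is the cleanest way to close that gap.
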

\begin{corollary}\label{cor:cyclicprime}
For an odd prime $p$, $f_p(x)=I(C_p,x)$.
\end{corollary}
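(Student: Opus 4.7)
The plan is to apply Proposition~\ref{prop:cyclicfactor} directly with $n=p$. Since $p$ is prime, its only positive divisors are $1$ and $p$, so the product on the right-hand side collapses to just two factors:
\begin{equation*}
I(C_p,x)=\prod_{m\mid p} f_m(x)=f_1(x)\,f_p(x).
\end{equation*}
By Definition~\ref{def:fn}, $f_1(x)=1$, so the identity reduces to $I(C_p,x)=f_p(x)$, which is exactly the claim.

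There is no obstacle here: the work has already been done in Propositions~\ref{prop:cyclicdeg} and~\ref{prop:cyclicfactor}. As a consistency check one could note that $\deg f_p(x)=\phi(p)/2=(p-1)/2=\lfloor p/2\rfloor=\deg I(C_p,x)$, which matches the collapse of the product above. Thus the corollary is an immediate specialisation of Proposition~\ref{prop:cyclicfactor} to the case where $n$ has no nontrivial proper divisors.
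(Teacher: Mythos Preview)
Your proof is correct and is exactly the intended argument: the paper states this as an immediate corollary of Proposition~\ref{prop:cyclicfactor} without giving a separate proof, and your one-line specialisation to $n=p$ (using $f_1(x)=1$) is precisely how it follows.
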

\begin{corollary}\cite{beaton}\label{cor:cyclicdiv}
For an odd positive integer $n$ and $k\neq 1$, $k\vert n$ if and only if $I(C_k,x)\vert I(C_n,x)$.
\end{corollary}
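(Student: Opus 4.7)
The plan is to deduce both directions of the biconditional as routine consequences of Proposition~\ref{prop:cyclicfactor} together with unique factorisation in $\mathbb{Z}[x]$.

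The forward direction is immediate: if $k\mid n$ (and hence $k$ is odd, since $n$ is), then every divisor of $k$ is also a divisor of $n$, so the factorisation of $I(C_k,x)$ given by Proposition~\ref{prop:cyclicfactor} appears as a sub-product of the factorisation of $I(C_n,x)$, and the ratio is the product of the remaining $f_m(x)$'s.

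For the converse, assume $I(C_k,x)\mid I(C_n,x)$. I would first rule out even $k$ by a short root comparison: the roots of $I(C_n,x)$ correspond, under the substitution used in the proof of Proposition~\ref{prop:cyclicdeg}, to cosine arguments $2\pi(2i-1)/(2n)$ whose lowest-terms denominator is twice an odd divisor of $n$, whereas for even $k$ the root of $I(C_k,x)$ coming from $i=1$ corresponds to the fraction $1/(2k)$ with denominator divisible by $4$, contradicting the divisibility. With $k$ now odd, Proposition~\ref{prop:cyclicfactor} gives $f_k(x)$ as a nontrivial (since $k\neq 1$) irreducible factor of $I(C_k,x)$, and hence of $\prod_{m\mid n} f_m(x) = I(C_n,x)$. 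Unique factorisation in $\mathbb{Z}[x]$ then forces $f_k = f_m$ for some $m\mid n$.

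The one step requiring any care is showing $f_k = f_m$ implies $k = m$; equivalently, that the polynomials $f_m$ for distinct odd $m>1$ are pairwise distinct. I would establish this by comparing their root sets via Proposition~\ref{prop:cyclicdeg}: a root of $f_m$ corresponds to the fraction $(2i-1)/(2m)$, which is already in lowest terms (because $\gcd(2i-1,m)=1$ and $2i-1$ is odd), so the denominator $2m$ is an invariant of the root. Distinct odd $m$ therefore yield disjoint root sets, allowing us to conclude $k=m$ and $k\mid n$. This pairwise-distinctness is the one mildly non-trivial ingredient, but it is essentially already present in the argument for Proposition~\ref{prop:cyclicdeg}, so the corollary follows with no substantial extra effort.
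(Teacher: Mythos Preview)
Your proof is correct and follows exactly the route the paper intends: the corollary is stated without proof, immediately after Proposition~\ref{prop:cyclicfactor}, as a direct consequence of that factorisation together with unique factorisation in $\mathbb{Z}[x]$ and the pairwise distinctness of the $f_m$. Your handling of the even-$k$ case and the injectivity $m\mapsto f_m$ via the lowest-terms denominator argument is precisely the detail the paper leaves implicit.
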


\begin{example}
We will illustrate Proposition~\ref{prop:cyclicfactor} for $n=3,5,9,15$. (We omit explicitly writing the factor $f_1(x)=1$.)
\begin{enumerate}[label=(\roman*)]
    \item In the case $n=3$, we have
    $$f_3(x) = I(C_3,x) = 1+3x,$$
    an irreducible polynomial.
    
    Similarly, in the case $n=5$, we have
    $$f_5(x) = I(C_5,x) = I(D_5,x) = 1+5x+5x^2.$$
    \item In the case $n=9$, we have
    \begin{eqnarray*}
    I(C_9,x) & = & 1+ 9x + 27x^2+ 30x^3 + 9x^4\\
             & = &\underbrace{(1+3x)}_{f_3(x)}\underbrace{(1 + 6x + 9x^2 + 3x^3)}_{f_9(x)}.
    \end{eqnarray*}
    Note that for the four graphs in Figure~\ref{fig:c9}, $$I(G_a,x)=I(G_b,x)=I(G_c,x)=I(G_d,x)=f_9(x).$$
    \item In the case $n=15$, we have
    \begin{eqnarray*}
    I(C_{15},x) & = & 1 + 15x + 90x^2 + 275x^3 + 450x^4 + 378x^5 + 140x^6 + 15x^7\\
    & = &\underbrace{(1+3x)}_{f_3(x)}\underbrace{(1+5x+5x^2)}_{f_5(x)}\underbrace{(1+7x+14x^2+8x^3+x^4)}_{f_{15}(x)}.
    \end{eqnarray*}
    Note that for the three graphs in Figure~\ref{fig:c15}, $$I(G^\prime_a,x)=I(G^\prime_b,x)=I(G^\prime_c,x)=f_{15}(x).$$
\end{enumerate}
\end{example}

We now show how this factorisation of $I(C_n,x)$ gives us information about graphs which are independence equivalent to it.

\section{Structure of a graph which is independence equivalent to $C_n$}\label{sec:structure}

\begin{definition}\cite{beaton}
A polynomial $p(x)=\sum_{i=0}^n p_i x^i$, where the $p_i$ are all non-negative integers, is unicyclic if $p_0 = 1$ and $p_2 = \binom{p_1}{2}-p_1$.
\end{definition}

\begin{corollary}\cite{beaton}\label{cor:unicyclic}
Let $G$ be a connected graph. Then $G$ has an independence polynomial that is unicyclic iff $G$ is unicyclic.
\end{corollary}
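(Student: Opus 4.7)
The plan is to reduce both directions to a simple count of edges. The first step is to observe that the coefficients $i_0(G)$, $i_1(G)$ and $i_2(G)$ of $I(G,x)$ have concrete combinatorial meanings: $i_0(G)=1$ since the empty set is independent, $i_1(G)=|V(G)|$ since every single vertex is an independent set, and $i_2(G)=\binom{|V(G)|}{2}-|E(G)|$ since a two-element subset of $V(G)$ is independent precisely when the two vertices are non-adjacent.

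Next I would substitute these expressions into the definition of a unicyclic polynomial. The condition $p_2=\binom{p_1}{2}-p_1$ applied to $I(G,x)$ becomes
$$\binom{|V(G)|}{2}-|E(G)| = \binom{|V(G)|}{2}-|V(G)|,$$
which simplifies to $|E(G)|=|V(G)|$. So $I(G,x)$ is unicyclic if and only if $G$ has the same number of edges as vertices.

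Finally, the corollary is a standard fact about connected graphs: a connected graph $G$ satisfies $|E(G)|=|V(G)|$ iff it is unicyclic in the graph-theoretic sense. Indeed, any spanning tree of $G$ has $|V(G)|-1$ edges, so $|E(G)|=|V(G)|$ means exactly one extra edge, which creates exactly one cycle. Combining this with the equivalence in the previous step yields the corollary.

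There is essentially no obstacle here; the statement is really just the observation that $i_0$, $i_1$ and $i_2$ encode the vertex and edge counts, together with the elementary characterisation of connected unicyclic graphs by $|E|=|V|$. The only mild care needed is to make sure that the non-negativity of coefficients assumed in the definition of a unicyclic polynomial is automatic for $I(G,x)$ (which it is, since the $i_k(G)$ are cardinalities), so the definition applies without precondition.
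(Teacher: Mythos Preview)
Your argument is correct. The paper does not supply its own proof of this corollary; it is quoted from~\cite{beaton}, so there is nothing to compare against beyond noting that your reasoning---identifying $i_0(G)=1$, $i_1(G)=|V(G)|$, $i_2(G)=\binom{|V(G)|}{2}-|E(G)|$ and reducing the unicyclic-polynomial condition to $|E(G)|=|V(G)|$---is exactly the standard justification for this fact.
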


\begin{prop}\cite{beaton}\label{prop:unicyclicprod}
Suppose that $p(x)=q(x)r(x)$.
\begin{enumerate}[label=(\roman*)]
    \item If $q(x)$ and $r(x)$ are unicyclic then $p(x)$ is unicyclic.
    \item If $p(x)$ and $q(x)$ are unicyclic then $r(x)$ is unicyclic.
\end{enumerate}
\end{prop}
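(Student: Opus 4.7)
The plan is to simply expand the product $p(x) = q(x)r(x)$ coefficient by coefficient through degree $2$, and to repeatedly use the Vandermonde-style identity
\[
\binom{a+b}{2} = \binom{a}{2} + \binom{b}{2} + ab.
\]
Writing $p(x) = \sum p_i x^i$, $q(x) = \sum q_i x^i$, $r(x) = \sum r_i x^i$, the relevant relations are $p_0 = q_0 r_0$, $p_1 = q_0 r_1 + q_1 r_0$, and $p_2 = q_0 r_2 + q_1 r_1 + q_2 r_0$.

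For part (i), assuming $q$ and $r$ are unicyclic gives $q_0 = r_0 = 1$, so $p_0 = 1$ and $p_1 = q_1 + r_1$. Substituting $q_2 = \binom{q_1}{2} - q_1$ and $r_2 = \binom{r_1}{2} - r_1$ into the formula for $p_2$ and applying the Vandermonde identity to $\binom{q_1+r_1}{2}$ should immediately yield $p_2 = \binom{p_1}{2} - p_1$, confirming that $p$ is unicyclic.

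For part (ii), assuming $p$ and $q$ unicyclic gives $q_0 = 1 = p_0 = q_0 r_0$, so $r_0 = 1$; then $r_1 = p_1 - q_1$ and $r_2 = p_2 - q_2 - q_1 r_1$. Substituting the unicyclic formulas for $p_2$ and $q_2$ and applying the Vandermonde identity to $\binom{p_1}{2} = \binom{(p_1 - q_1) + q_1}{2} = \binom{r_1}{2} + \binom{q_1}{2} + q_1 r_1$ should collapse the expression for $r_2$ to $\binom{r_1}{2} - r_1$. (Implicit in part (ii) is the hypothesis that $r$ has non-negative integer coefficients, which must be assumed — it is not automatic — so that the defining condition of ``unicyclic'' applies.)

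There is no real obstacle here beyond careful bookkeeping; the only thing to be alert to is keeping track of signs when solving for $r_1$ and $r_2$ in part (ii), and being explicit about the coefficient-nonnegativity hypothesis. The whole argument is a few lines of arithmetic once the Vandermonde identity is invoked.
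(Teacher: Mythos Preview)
Your argument is correct: the coefficient relations $p_0=q_0r_0$, $p_1=q_0r_1+q_1r_0$, $p_2=q_0r_2+q_1r_1+q_2r_0$ together with the identity $\binom{a+b}{2}=\binom{a}{2}+\binom{b}{2}+ab$ give both directions in a couple of lines, and you are right to flag that in part~(ii) the nonnegativity of the coefficients of $r$ must be taken as a standing hypothesis rather than derived. Note, however, that the paper does not supply its own proof of this proposition; it is quoted as a result from Beaton, Brown and Cameron~\cite{beaton}, so there is no in-paper argument to compare yours against.
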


\begin{prop}\label{prop:unicyclicf}
The polynomials $f_n(x)$ for odd $n\geq 3$ are all unicyclic.
\end{prop}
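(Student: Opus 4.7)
The plan is to proceed by strong induction on odd $n\geq 3$, leveraging Proposition~\ref{prop:cyclicfactor} together with Proposition~\ref{prop:unicyclicprod} to transfer the unicyclic property from $I(C_n,x)$ down to its factor $f_n(x)$. The base case $n=3$ reduces to a direct verification: $f_3(x)=I(C_3,x)=1+3x$ has $p_0=1$, $p_1=3$ and $p_2=0=\binom{3}{2}-3$.

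For the inductive step, I would fix an odd $n\geq 5$ and assume that $f_m(x)$ is unicyclic for every odd $m$ with $3\leq m<n$. Using $f_1(x)=1$, Proposition~\ref{prop:cyclicfactor} gives the identity
$$I(C_n,x)=f_n(x)\cdot\prod_{\substack{m\mid n\\ 3\leq m<n}}f_m(x).$$
By the inductive hypothesis each factor on the right is unicyclic, so iterating Proposition~\ref{prop:unicyclicprod}(i) shows the product itself is unicyclic (when $n$ is prime this product is empty, and we use Corollary~\ref{cor:cyclicprime} instead, which gives $f_n(x)=I(C_n,x)$ directly). Since $C_n$ is a connected unicyclic graph, Corollary~\ref{cor:unicyclic} ensures that $I(C_n,x)$ is unicyclic. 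Applying Proposition~\ref{prop:unicyclicprod}(ii) to $p=I(C_n,x)$, $q=\prod_{3\leq m<n,\,m\mid n}f_m(x)$, and $r=f_n(x)$ then delivers the conclusion.

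I do not anticipate a genuine obstacle: the argument is formal once the cyclotomic-style factorisation of $I(C_n,x)$ into the polynomials $f_m(x)$ and both parts of Proposition~\ref{prop:unicyclicprod} are in hand. The only bookkeeping point worth attention is separating out the prime and prime-power cases, so that the empty product is handled cleanly and the induction really does reach every odd $n\geq 3$ via its proper odd divisors.
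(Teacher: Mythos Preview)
Your proof is correct and follows essentially the same approach as the paper: both argue by induction, using the factorisation of $I(C_n,x)$ from Proposition~\ref{prop:cyclicfactor} together with Proposition~\ref{prop:unicyclicprod}(ii) to peel off $f_n(x)$ from $I(C_n,x)$ once the smaller $f_m(x)$ are known to be unicyclic. The only cosmetic difference is that the paper inducts on the number of prime factors of $n$ (with multiplicity) rather than on $n$ itself, but since proper divisors of $n$ are strictly smaller than $n$, your strong induction on $n$ works equally well.
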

\begin{proof}
Suppose that $n$ has prime factorisation $p_1^{r_1}p_2^{r_2}\ldots p_j^{r_j}$. We use induction on $s_n = r_1 + \ldots + r_j$. If $s_n=1$ then $n$ is prime, and therefore, by Corollary~\ref{cor:cyclicprime}, $f_n(x)=I(C_n,x)$ which is unicyclic. Suppose it holds for all odd $n\geq 3$ where $1\leq s_n < s$.

Let $n\geq 3$ be odd where $s_n = s\geq 2$. Now
$$f_n(x)=\frac{I(C_n,x)}{\displaystyle\prod_{m\vert n, m < n}f_m(x)}.$$
$I(C_n,x)$ is unicyclic and, for all $m$ such that $m\vert n$, $m<n$, we have $s_m < s$ so $f_m(x)$ is unicyclic by our induction hypothesis. Therefore, by Proposition~\ref{prop:unicyclicprod}(ii), $f_n(x)$ is also unicyclic.
\end{proof}

\begin{prop}\label{prop:unicyclicsubgraph}
If $G\in\mathcal{I}(C_n)$, where $n$ is odd, and $G$ is the disjoint union of connected graphs $G_1, G_2, \ldots, G_r$, then $G_1, G_2, \ldots, G_r$ are all unicyclic.
\end{prop}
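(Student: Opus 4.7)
The plan is to leverage the irreducible factorization $I(C_n,x)=\prod_{m\mid n} f_m(x)$ from Proposition~\ref{prop:cyclicfactor} against the component-wise factorization $I(G,x)=\prod_{i=1}^r I(G_i,x)$. Since $G\in\mathcal{I}(C_n)$ gives $I(G,x)=I(C_n,x)$, these two products coincide as polynomials in $\mathbb{Z}[x]$, so the plan is to match up the factors via unique factorization and then transfer the unicyclic property from the $f_m$'s to the $I(G_i,x)$'s, and finally to the graphs $G_i$.

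First I would use that each $f_m(x)$ with $m\geq 3$ a divisor of $n$ is irreducible over $\mathbb{Z}$ by Proposition~\ref{prop:cyclicdeg} (it is a minimal polynomial), while $f_1(x)=1$. By Proposition~\ref{prop:unicyclicf}, each $f_m$ with $m\geq 3$ may be normalized so that $f_m(0)=1$ with non-negative coefficients; likewise each $I(G_i,x)$ is primitive with constant term $1$. Unique factorization in $\mathbb{Z}[x]$ then forces a partition $\{S_1,\ldots,S_r\}$ of the divisors of $n$ such that
\[
I(G_i,x)=\prod_{m\in S_i} f_m(x)
\]
for each $i$. The sign ambiguity inherent in matching irreducible factors over $\mathbb{Z}[x]$ is eliminated by comparing constant terms, both of which are $1$.

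Next, by Proposition~\ref{prop:unicyclicf}, each factor $f_m(x)$ with $m\geq 3$ is unicyclic, and $f_1(x)=1$ satisfies the unicyclic defining condition trivially ($p_0=1$, $p_1=p_2=0$, and $\binom{0}{2}-0=0$). Applying Proposition~\ref{prop:unicyclicprod}(i) finitely many times to the factors in $S_i$, each $I(G_i,x)$ is unicyclic. Since each $G_i$ is connected, Corollary~\ref{cor:unicyclic} immediately promotes this to the conclusion that each $G_i$ is a unicyclic graph.

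The main obstacle is the bookkeeping around units and sign conventions needed to make the UFD step unambiguous, namely confirming that each $f_m$ can indeed be taken with constant term $1$ and positive leading coefficient (so that no spurious $\pm1$ multiplier creeps into the factorization of $I(G_i,x)$). A straightforward approach via induction on $r$ using Proposition~\ref{prop:unicyclicprod}(ii) alone would be circular because we would need to know in advance that a single $I(G_i,x)$ is unicyclic; this is why the factorization of $I(C_n,x)$ into explicit irreducibles, rather than just its overall unicyclicity, is essential.
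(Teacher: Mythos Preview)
Your argument is correct and follows essentially the same route as the paper: each $I(G_i,x)$ divides $I(C_n,x)=\prod_{m\mid n}f_m(x)$, so by unique factorization it is a product of some of the (unicyclic) $f_m$'s, whence $I(G_i,x)$ is unicyclic by Proposition~\ref{prop:unicyclicprod}(i) and $G_i$ is unicyclic by Corollary~\ref{cor:unicyclic}. The paper's proof is terser and skips the explicit partition/sign bookkeeping you spell out, but the substance is identical.
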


\begin{proof}
For each $i\in\{1,\ldots,r\}$, $I(G_i,x)$ is a factor of $I(C_n,x)$. Therefore, it must be a product of some polynomials $f_m(x)$ where $m\vert n$. Since each of the $f_m(x)$ are unicyclic, their product $I(G_i,x)$ is unicyclic by Proposition~\ref{prop:unicyclicprod}(i), and hence $G_i$ is unicyclic by Corollary~\ref{cor:unicyclic}.
\end{proof}

Therefore, if $G\in\mathcal{I}(C_n)$, where $n$ is odd, then $G=D_n$ or $G$ is the disjoint union of a set of unicyclic graphs.

We will make use of the following notation and propositions from Beaton, Brown and Cameron~\cite{beaton}.

\begin{definition}
Let $n\geq 4$ be a positive integer and $G$ be a graph. Let $n_G(C_3)$ be the number of triangles in $G$, and $g_i$ denote the number of vertices in $G$ of degree~$i$.
\end{definition}

\begin{prop}\cite{beaton}\label{prop:degreesum}
If $G\in\mathcal{I}(C_n)$, where $n\geq 4$, then
\begin{enumerate}[label=(\roman*)]
    \item $\displaystyle\sum_{i=0}^{n-1}g_i = n$,
    \item $\displaystyle\sum_{i=1}^{n-1}i\cdot g_i = 2n$,
    \item $\displaystyle\sum_{i=2}^{n-1}\binom{i}{2}g_i = n + n_G(C_3)$,
    \item $n_G(C_3)\geq g_0 + \displaystyle\sum_{i=3}^{n-1} g_i$, so that the number of vertices in $G$ not of degree~1 or~2 is $n(C_3)$.
\end{enumerate}
\end{prop}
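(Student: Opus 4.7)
The plan is to derive all four identities by comparing coefficients of $I(G,x)$ and $I(C_n,x)$ for small degrees. Since $I(G,x)=I(C_n,x)$, we have $i_k(G)=i_k(C_n)$ for every $k$, and parts~(i)--(iii) should fall out directly by expressing $i_1$, $i_2$ and $i_3$ in terms of $|V(G)|$, $|E(G)|$, the degree sequence, and the triangle count. Part~(iv) should then follow from a linear combination of (i)--(iii).

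For~(i), I would observe that $i_1(G)=|V(G)|=\sum_i g_i$, so $i_1(G)=i_1(C_n)=n$ yields~(i) immediately. For~(ii), since a pair of vertices fails to be independent exactly when it forms an edge, $i_2(G)=\binom{|V(G)|}{2}-|E(G)|$; equating this to $i_2(C_n)=\binom{n}{2}-n$ and applying the handshake identity $\sum_i i\,g_i=2|E(G)|$ gives~(ii).

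The main work is in~(iii), which requires a careful inclusion--exclusion for $i_3$. Classifying $3$-subsets of $V(G)$ by the number of edges they induce ($0$, $1$, $2$ or $3$), and noting that $\sum_v \binom{\deg v}{2}$ counts paths of length $2$ while each triangle contributes three such paths, a short count should yield
$$i_3(G) = \binom{|V(G)|}{3} - |E(G)|\bigl(|V(G)|-2\bigr) + \sum_v \binom{\deg v}{2} - n_G(C_3).$$
Applying this to $C_n$ (where $n\geq 4$, so $n_{C_n}(C_3)=0$ and $\sum_v \binom{\deg v}{2}=n$) and equating $i_3(G)=i_3(C_n)$, then using~(i) and~(ii), should collapse to~(iii).

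For~(iv), I would compute the linear combination $\text{(i)}-\text{(ii)}+\text{(iii)}$. The $g_1$ and $g_2$ contributions cancel, and the coefficient of $g_i$ for $i\geq 3$ becomes $1-i+\binom{i}{2}=\binom{i-1}{2}$, leaving
$$g_0 + \sum_{i\geq 3}\binom{i-1}{2}\,g_i = n_G(C_3).$$
Since $\binom{i-1}{2}\geq 1$ for all $i\geq 3$, part~(iv) follows at once. Overall, the only step that demands any care is the inclusion--exclusion expression for $i_3$ (overcounting of paths and triangles is the natural pitfall); the rest is routine bookkeeping with the three linear identities.
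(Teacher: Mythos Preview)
Your argument is correct. The paper itself does not give a proof of this proposition; it is quoted from \cite{beaton} and stated without proof. There is thus no ``paper's own proof'' to compare against directly. That said, your derivation is the standard one, and in fact the paper carries out exactly your linear combination $\text{(i)}+\text{(iii)}-\text{(ii)}$ in its proof of the next result (Proposition~\ref{prop:degreesum2}), obtaining the identity $n_G(C_3)=g_0+\sum_{i\ge 3}\binom{i-1}{2}g_i$ from which your inequality~(iv) is immediate. So your approach is fully aligned with both the cited source and the paper's own use of the proposition.

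One small remark on~(iv): the statement as printed concludes that ``the number of vertices in $G$ not of degree~1 or~2 is $n_G(C_3)$,'' which is an \emph{equality}, not just the inequality you derive. Strictly speaking this stronger claim does not follow from the inequality alone (it would need $g_0=0$ and $g_i=0$ for $i\ge 4$, which are proved only later in the paper). This is a quirk of the statement rather than a defect in your proof; your inequality is exactly what the displayed formula asserts, and the paper itself only uses the identity form established in Proposition~\ref{prop:degreesum2}.
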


Actually, Beaton, Brown and Cameron proved a stronger statement than Proposition~\ref{prop:degreesum}(iv).
\begin{prop}\label{prop:degreesum2}
If $G\in\mathcal{I}(C_n)$, where $n\geq 4$, then
$$n_G(C_3)= \sum_{i=3}^{n-1}\binom{i-1}{2}g_i$$
\end{prop}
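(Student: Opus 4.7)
The plan is to derive the equality by combining parts~(i)--(iii) of Proposition~\ref{prop:degreesum} with the Pascal identity $\binom{i}{2}=\binom{i-1}{2}+(i-1)$, and then to dispose of an extra $g_0$ term using Proposition~\ref{prop:cyclicroot}.

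First I would rewrite the left-hand side of part~(iii) via Pascal:
$$\sum_{i=2}^{n-1}\binom{i}{2}g_i=\sum_{i=3}^{n-1}\binom{i-1}{2}g_i+\sum_{i=2}^{n-1}(i-1)g_i,$$
where the first sum on the right can start at $i=3$ because $\binom{1}{2}=0$. Splitting the linear piece as $\sum_{i=2}^{n-1}(i-1)g_i=\sum_{i=2}^{n-1}ig_i-\sum_{i=2}^{n-1}g_i$ and feeding in parts~(i) and~(ii) makes the $g_1$ contributions cancel, leaving $2n-(n-g_0)=n+g_0$. Substituting back into part~(iii) yields
$$n_G(C_3)=g_0+\sum_{i=3}^{n-1}\binom{i-1}{2}g_i,$$
so it remains only to verify $g_0=0$.

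If $G$ had an isolated vertex $v$, then $I(G,x)=(1+x)\,I(G-v,x)$, so $-1$ would be a root of $I(C_n,x)$. By Proposition~\ref{prop:cyclicroot} this would demand $\cos((2i-1)\pi/n)=-1/2$ for some $i\in\{1,\ldots,\lfloor n/2\rfloor\}$, and since $(2i-1)\pi/n\in(0,\pi)$ the only possibility is $(2i-1)/n=2/3$, i.e., $3(2i-1)=2n$. This fails by parity (odd equals even), so $g_0=0$ and the identity follows. The only genuinely non-routine ingredient is this parity observation; everything else is linear bookkeeping on the moments $\sum g_i$, $\sum ig_i$, and $\sum\binom{i}{2}g_i$ that Beaton, Brown and Cameron already supply in Proposition~\ref{prop:degreesum}.
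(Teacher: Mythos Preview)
Your proof is correct and follows essentially the same approach as the paper: combine parts (i)--(iii) of Proposition~\ref{prop:degreesum} into the identity $n_G(C_3)=g_0+\sum_{i\ge 3}\binom{i-1}{2}g_i$, then argue $g_0=0$ because an isolated vertex would force $-1$ to be a root of $I(C_n,x)$, contrary to Proposition~\ref{prop:cyclicroot}. The only difference is cosmetic---the paper forms (i)$+$(iii)$-$(ii) directly rather than invoking Pascal, and simply asserts that $-1$ is not among the roots listed in Proposition~\ref{prop:cyclicroot}, whereas you spell out the parity obstruction $3(2i-1)=2n$ explicitly.
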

\begin{proof}
From Proposition~\ref{prop:degreesum}, if we add (i) and (iii) and subtract (ii), we get
\begin{eqnarray*}
n_G(C_3) & = & \sum_{i=0}^{n-1}g_i+\sum_{i=2}^{n-1}\binom{i}{2}g_i-\sum_{i=1}^{n-1}i\cdot g_i\\
         & = & g_0+\sum_{i=3}^{n-1}\left(\binom{i}{2}-i+1\right)g_i \\
         & = & g_0+\sum_{i=3}^{n-1}\binom{i-1}{2}g_i.
\end{eqnarray*}
Now $g_0$ is the number of isolated vertices. However, if $G$ has an isolated vertex, then $I(K_1,x)=1+x$ would be a factor of $I(G,x)$, so $-1$ would be a root of $I(G,x)=I(C_n,x)$. However, we can see from Proposition~\ref{prop:cyclicroot} that this is not possible. Hence $g_0=0$ and the result follows.
\end{proof}

We now try to elucidate the structure of a graph $G\in\mathcal{I}(C_n)$ for odd $n\geq 5$. If $G$ is not $C_n$ or $D_n$ then, by Proposition~\ref{prop:connecteddn}, $G$ is not connected, and by Proposition~\ref{prop:unicyclicsubgraph}, $G=\bigcup_{i=1}^r G_i$ where the $G_i$ are connected unicyclic graphs.

Since $I(C_n,x)$ does not have repeated roots, at most one of the connected components can be $C_3$. If any of the other connected components contain a triangle, then at least one of the vertices of the triangle must have degree~3 or larger.

\begin{prop}\label{prop:maxdegree3}
If $G\in\mathcal{I}(C_n)$, where $n\geq 4$, then the maximum degree $\Delta(G)$ of $G$ is at most~$3$ and
$$n_G(C_3)=g_3.$$
\end{prop}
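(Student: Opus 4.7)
The plan is to upgrade the lower bound hidden in Proposition~\ref{prop:degreesum2} to an equality, by producing an almost-matching upper bound on $n_G(C_3)$ that comes from the structure of unicyclic components; this will force $g_i=0$ for $i\geq 4$, from which both conclusions follow immediately.

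First, I would invoke Proposition~\ref{prop:unicyclicsubgraph} to write $G=\bigcup_{i=1}^r G_i$ as a disjoint union of connected unicyclic components (in particular $g_0=0$, since an isolated vertex is a tree, not a unicyclic graph). Each unicyclic graph has a unique cycle and therefore carries at most one triangle; moreover, any triangle-carrying component that is not itself the graph $C_3$ must have at least one further edge attached to one of the three triangle vertices, forcing that vertex to have degree at least $3$ in $G$. Since the components are vertex-disjoint, distinct non-$C_3$ triangle components contribute distinct vertices of degree $\geq 3$.

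Next, I would note that at most one component of $G$ can be isomorphic to $C_3$: by Proposition~\ref{prop:cyclicroot} the roots of $I(C_n,x)$ are pairwise distinct, so the factor $I(C_3,x)=1+3x$ can divide $I(G,x)=I(C_n,x)$ with multiplicity at most one. Combined with the previous observation this yields the clean structural bound
$$ n_G(C_3)\;\leq\; 1 + \sum_{i=3}^{n-1} g_i. $$

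Finally, I would pit this against the identity $n_G(C_3)=\sum_{i=3}^{n-1}\binom{i-1}{2}g_i$ from Proposition~\ref{prop:degreesum2}. Subtracting $\sum_{i\geq 3} g_i$ from both sides collapses to
$$ \sum_{i=4}^{n-1}\left(\binom{i-1}{2}-1\right)g_i \;\leq\; 1, $$
and since $\binom{i-1}{2}-1\geq 2$ for every $i\geq 4$, each $g_i$ with $i\geq 4$ must vanish, giving $\Delta(G)\leq 3$; the equality $n_G(C_3)=g_3$ then drops out of Proposition~\ref{prop:degreesum2}. The main obstacle is really just spotting the correct structural bound --- once one realises that the only slack in pairing triangles with degree-$\geq 3$ vertices is a possible single $C_3$ component, the rest is a one-line integer-arithmetic check.
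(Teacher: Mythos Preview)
Your argument is correct and is essentially the same as the paper's. Both proofs combine the identity of Proposition~\ref{prop:degreesum2} with the structural upper bound $n_G(C_3)\leq 1+\sum_{i\geq 3}g_i$ (the paper writes this as $g_3+h\geq n_G(C_3)-1$ with $h=\sum_{i\geq 4}g_i$), obtained from unicyclicity of the components together with the fact that at most one component can be $C_3$; the resulting integer inequality then forces $g_i=0$ for $i\geq 4$. Your explicit invocation of Proposition~\ref{prop:unicyclicsubgraph} makes visible the reliance on odd $n$ that is also present (implicitly, via the surrounding discussion) in the paper's proof.
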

\begin{proof}
From Proposition~\ref{prop:degreesum2},
$$n_G(C_3)=g_3+3g_4+6g_5+\ldots+\binom{n-2}{2}g_{n-1}.$$
Let $h=g_4+\ldots+g_{n-1}>0$. Then $n_G(C_3)\geq g_3+3h$. Since every triangle, except at most one, has a vertex of degree~3 or larger, we see that $g_3+h$ must be at least as large as the number of triangles less one, that is, $g_3+h\geq n_G(C_3)-1$. Since we have $g_3+h\geq g_3+3h-1$ and $h$ is a nonnegative integer, the only possibility is that $h=0$, so $g_4=\ldots=g_{n-1}=0$ and the maximum degree of $G$ is~$3$. This also implies that $n_G(C_3)=g_3$.
\end{proof}

\begin{prop}\label{prop:unioncn}
If $G\in\mathcal{I}(C_n)$, where $n\geq 4$, and $G$ does not contain $C_3$ as a connected component, then each connected component of $G$ is either a cycle or a graph of the form $D_m$ for some values of $m$.
\end{prop}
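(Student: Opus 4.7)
The plan is to combine the structural information already established---each connected component of $G$ is unicyclic (Proposition~\ref{prop:unicyclicsubgraph}), together with $\Delta(G)\leq 3$ and $n_G(C_3)=g_3(G)$ (Proposition~\ref{prop:maxdegree3})---with a sharp local inequality on each component. Let $G_1,\ldots,G_r$ be the connected components of $G$; each is a unicyclic graph of maximum degree at most $3$, hence consists of its unique cycle $C_{k_i}$ together with pendant subtrees rooted at some of its cycle vertices. Since a cycle vertex already has degree $2$ from the cycle, it bears at most one tree edge, and every non-cycle vertex can have at most two children in its subtree.

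The key step is the following local claim, proved by case analysis on the length of the cycle: if $H$ is a connected unicyclic graph with $\Delta(H)\leq 3$ and $H\not\cong C_3$, then $n_H(C_3)\leq g_3(H)$, with equality exactly when $H$ is a cycle or a graph of the form $D_m$. When $H$'s unique cycle has length $k\geq 4$, both sides vanish iff $H$ is a pure cycle: no triangle is present, while any nonempty attached subtree produces at least one vertex of degree $3$ (either the cycle vertex bearing it, or a branching point inside it). When $k=3$, the single triangle contributes $1$ to the left side; since $H\not\cong C_3$, at least one triangle vertex carries a subtree, contributing at least $1$ to the right side; equality then forces exactly one triangle vertex to bear a subtree and that subtree to be an unbranching path, which is precisely the shape of $D_m$.

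Summing this local inequality over the components $G_i$, all of which are non-$C_3$ by hypothesis, yields
\[n_G(C_3)=\sum_i n_{G_i}(C_3)\leq\sum_i g_3(G_i)=g_3(G).\]
Proposition~\ref{prop:maxdegree3} forces the two ends to be equal, so the inequality is tight componentwise, and each $G_i$ is therefore a cycle or a $D_m$. The main obstacle is the local case analysis: one must carefully track every source of degree-$3$ vertices (cycle vertices bearing a subtree, and branching vertices inside the subtrees) and verify that any configuration other than a pure cycle or the $D_m$ shape strictly exceeds the triangle count of the component. Once that bookkeeping is pinned down, the global conclusion follows immediately from the identity in Proposition~\ref{prop:maxdegree3}.
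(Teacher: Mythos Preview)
Your argument is correct and follows essentially the same route as the paper's proof: both use that the components are unicyclic with $\Delta\leq 3$ together with the global identity $n_G(C_3)=g_3$ from Proposition~\ref{prop:maxdegree3}, and both pin down each component by matching triangles against degree-$3$ vertices. Your presentation is somewhat more explicit---you isolate the componentwise inequality $n_H(C_3)\leq g_3(H)$ for non-$C_3$ unicyclic $H$ with $\Delta(H)\leq 3$, characterise its equality cases, and then sum---whereas the paper argues one component at a time and appeals to the global identity directly; but the underlying reasoning is the same.
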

\begin{proof}
Let $G_i$ be one of the connected components of $G$. From Proposition~\ref{prop:maxdegree3}, the maximum degree of~$G$ is at most~$3$. Since $G_i$ is unicyclic, if the maximum degree of $G_i$ is~$2$, then $G_i$ must be a cycle.

If the maximum degree of $G_i$ is~$3$, then $G_i$ must contain a triangle. Otherwise, $G$ would need to have $C_3$ as a connected component so that $n_G(C_3)=g_3$. But this also implies that there must be exactly one vertex of degree~3 per triangle, so there is only one such vertex in $G_i$. Therefore, $G_i$ must be isomorphic to $D_m$ for some value of $m$.

Therefore, each connected component of $G$ is either a cycle or a graph of the form $D_m$ for some values of $m$.
\end{proof}

\section{The case where $n$ is not a multiple of $3$}\label{sec:notmult3}

\begin{prop}\label{prop:notmult3}
If $G\in\mathcal{I}(C_n)$ where $n$ is an odd positive number that is not a multiple of~3, then $G=C_n$ or $G=D_n$.
\end{prop}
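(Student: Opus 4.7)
The plan is to assume toward contradiction that $G\in\mathcal{I}(C_n)$ with $G\neq C_n$ and $G\neq D_n$, and derive that $G$ must after all be connected, contradicting Proposition~\ref{prop:connecteddn}. By that proposition, $G$ is disconnected, so write $G=G_1\cup\cdots\cup G_r$ with $r\geq 2$ and each $G_i$ connected; by Proposition~\ref{prop:unicyclicsubgraph} each $G_i$ is unicyclic.

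The first substantive step is to rule out $C_3$ as a component. If some $G_i\cong C_3$, then $I(C_3,x)=1+3x$ would divide $I(G,x)=I(C_n,x)$, forcing $3\mid n$ by Corollary~\ref{cor:cyclicdiv}, contrary to hypothesis. Hence $C_3$ is not among the $G_i$, so Proposition~\ref{prop:unioncn} applies and each $G_i$ is either $C_{m_i}$ or $D_{m_i}$ for some $m_i$. In either case $I(G_i,x)=I(C_{m_i},x)$, so $I(C_{m_i},x)\mid I(C_n,x)$, and Corollary~\ref{cor:cyclicdiv} gives $m_i\mid n$ for every $i$.

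Now I would invoke the factorisation machinery of Section~\ref{sec:factor}. By Proposition~\ref{prop:cyclicfactor},
$$I(G,x)=\prod_{i=1}^{r} I(C_{m_i},x)=\prod_{i=1}^{r}\prod_{d\mid m_i} f_d(x)\quad\text{and}\quad I(C_n,x)=\prod_{d\mid n} f_d(x).$$
Each $f_d$ is irreducible over $\mathbb{Z}[x]$ by Proposition~\ref{prop:cyclicdeg}, and the $f_d$ for distinct $d\mid n$ are pairwise non-associate because their root sets (given explicitly in Definition~\ref{def:fn}) are disjoint. Since $\mathbb{Z}[x]$ is a UFD, the two factorisations must agree as multisets of irreducibles, so each $f_d$ with $d\mid n$ occurs exactly once on the left. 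In particular $f_n$ appears once, which forces $n\mid m_i$ for exactly one index $i$; combined with $m_i\mid n$ this gives $m_i=n$. That single component then already has $n$ vertices, so $r=1$ and $G$ is connected—a contradiction.

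I expect no real obstacle: the only mildly delicate point is invoking unique factorisation, which requires knowing that the $f_d$'s for different divisors $d$ of $n$ are non-associate; this follows immediately from their explicit root description, since the values $c_i=-1/(2+2\cos((2i-1)\pi/d))$ depend on $d$. Everything else is bookkeeping with the results already established.
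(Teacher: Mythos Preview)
Your proof is correct and follows essentially the same route as the paper: rule out $C_3$ as a component via Corollary~\ref{cor:cyclicdiv}, invoke Proposition~\ref{prop:unioncn} to reduce to components of type $C_{m_i}$ or $D_{m_i}$, and then use the factorisation $I(C_n,x)=\prod_{d\mid n}f_d(x)$ together with unique factorisation in $\mathbb{Z}[x]$ to force a contradiction. The only difference is in the endgame: the paper first argues that the $m_i$ are pairwise coprime (via the absence of repeated roots of $I(C_n,x)$) and then observes that $f_{m_i m_j}$ is missing from the left-hand factorisation, whereas you go straight for the irreducible $f_n$ and note that it must be contributed by some $I(C_{m_i},x)$, forcing $m_i=n$ and hence $r=1$. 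Your version is slightly more direct and avoids the detour through coprimality; both arguments rest on the same machinery.
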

\begin{proof}
If $G$ is not $C_n$ or $D_n$ then $G$ is not connected, and by Proposition~\ref{prop:unicyclicsubgraph}, $G=\bigcup_{i=1}^r G_i$ where the $G_i$ are connected unicyclic graphs. Since $n$ is not a multiple of $3$, none of the $G_i$ can be $C_3$, by Corollary~\ref{cor:cyclicdiv}.

By Proposition~\ref{prop:unioncn}, $G$ is a disjoint union of cyclic graphs and $D_m$'s. But each $D_m$ is independence equivalent to $C_m$, hence $G$ is independence equivalent to a disjoint union of cyclic graphs. In other words, $I(G,x)$ is a product of independence polynomials of cyclic graphs:

$$I(G,x)=\prod_{i=1}^r I(C_{n_i},x).$$

Now I claim that $\gcd(n_i,n_j)=1$ for all $i,j\in\{1,\ldots\,r\}$. Suppose this was not the case, and that for some $i,j\in\{1,\ldots\,r\}$, $\gcd(n_i,n_j)=d>1$. Then, from Proposition~\ref{prop:cyclicroot},
$$-\frac{1}{2+2\cos\left(\frac{\pi}{d}\right)}=-\frac{1}{2+2\cos\left(\frac{(n_i/d)\pi}{n_i}\right)}=-\frac{1}{2+2\cos\left(\frac{(n_j/d)\pi}{n_j}\right)}$$
is a root of both $I(C_{n_i},x)$ and $I(C_{n_j},x)$, and hence a repeated root of $I(G,x)$. However, also from Proposition~\ref{prop:cyclicroot}, none of the roots of $I(G,x)=I(C_n,x)$ are repeated, which is a contradiction. Hence, $\gcd(n_i,n_j)=1$ for all $i,j\in\{1,\ldots\,r\}$.

Now, from Proposition~\ref{prop:cyclicfactor},
$$I(G,x)=\prod_{i=1}^r \prod_{m\vert n_i} f_m(x).$$
For some $i,j\in\{1,\ldots\,r\}$, note that $f_{n_i}(x)$ and $f_{n_j}(x)$ are included among the factors, but since $\gcd(n_i,n_j)=1$, $f_{n_in_j}(x)$ is not.

We also have
$$I(G,x)=I(C_n,x)=\prod_{m\vert n} f_m(x).$$
Therefore, since $f_{n_i}(x)$ and $f_{n_j}(x)$ are factors of $I(C_n,x)$, it follows that $n_i\vert n$ and $n_j\vert n$, and since $\gcd(n_i,n_j)=1$, $n_in_j\vert n$, so  $f_{n_in_j}(x)$ is also a factor of $I(G,x)$. This is a contradiction.

Hence, $G$ must be a connected graph and so can only be $C_n$ or $D_n$.
\end{proof}
\section{The case where $n$ is a multiple of $3$}\label{sec:mult3}
If $n$ is a multiple of $3$, and if $G$ is a disconnected graph which is independence equivalent to $C_n$, then it turns out that $C_3$ must be exactly one of the connected components of $G$. If this is not the case, then the proof of Proposition~\ref{prop:notmult3} would show that $G$ must be a connected graph. On the other hand, since the independence polynomial of $C_n$ does not have repeated roots, at most one of the connected components can be $C_3$. If any of the other connected components contain a triangle, then at least one of the vertices of the triangle must have degree~3 or larger.

Let $G=\bigcup_{i=1}^r G_i$ where the $G_i$ are connected unicyclic graphs. Let $g_{3,i}$ be the number of vertices of degree~3 in $G_i$, and let $G_1=C_3$. Now $n_{G_i}(C_3)$ is either~$0$ or~$1$, and if $n_{G_i}(C_3)=1$ then $g_{3,i}\geq 1$ for $i \in \{2,\ldots, r\}$. Therefore, $g_{3,i}\geq n_{G_i}(C_3)$ for all $i \in \{2,\ldots, r\}$. But we have
$$1+\sum_{i=2}^r n_{G_i}(C_3) = n_G(C_3) = g_3 = \sum_{i=2}^r g_{3,i}$$
and so $g_{3,i} = n_{G_i}(C_3)$ for all values of $i \in \{2,\ldots, r\}$ except one (without loss of generality, let it be $r$), and in that case, $g_{3,r}= n_{G_r}(C_3)+1$.

For those values of $i$ where $g_{3,i} = n_{G_i}(C_3)$, we have either $g_{3,i} = n_{G_i}(C_3)=0$, which means $G_i=C_m$ for some $m\geq 4$, or $g_{3,i} = n_{G_i}(C_3)=1$, which, by an argument in the proof of Proposition~\ref{prop:unioncn}, means that $G_i=D_m$ for some $m\geq 4$. Since $C_m$ and $D_m$ are independence equivalent, we will not consider the case where $G_i=D_m$, and instead replace all $D_m$ with $C_m$. Since $I(C_m,x)\vert I(C_n,x)$, by Proposition~\ref{cor:cyclicdiv}, $m\vert n$. In particular, $m$ is odd.

As for $G_r$, where $g_{3,r}= n_{G_r}(C_3)+1$, we have the following cases (see Figure~\ref{fig:Gr}), where $m_1,m_2,m_3$ are positive integers unless otherwise stated.
\begin{enumerate}
    \item $g_{3,r}=1$ and $n_{G_r}(C_3)=0$. In this case, the only possibility is $E_{m_1,m_2}$, a graph where a pendant path $P_{m_2}$ is attached to a cyclic graph $C_{m_1+3}$.
    \item $g_{3,r}=2$ and $n_{G_r}(C_3)=1$. This leads us to two subcases:
    \begin{enumerate}
        \item If both vertices of degree~3 are on the triangle, then we have $A_{m_1,m_2}$, a graph where pendant paths $P_{m_1}$ and $P_{m_2}$ are attached to two distinct vertices of a $C_3$.
        \item If only one vertex of degree~3 is on the triangle, then we have $B_{m_1,m_2,m_3}$. It is possible that $m_1=0$, in which case the two vertices of degree~3 are adjacent.
    \end{enumerate}
\end{enumerate}
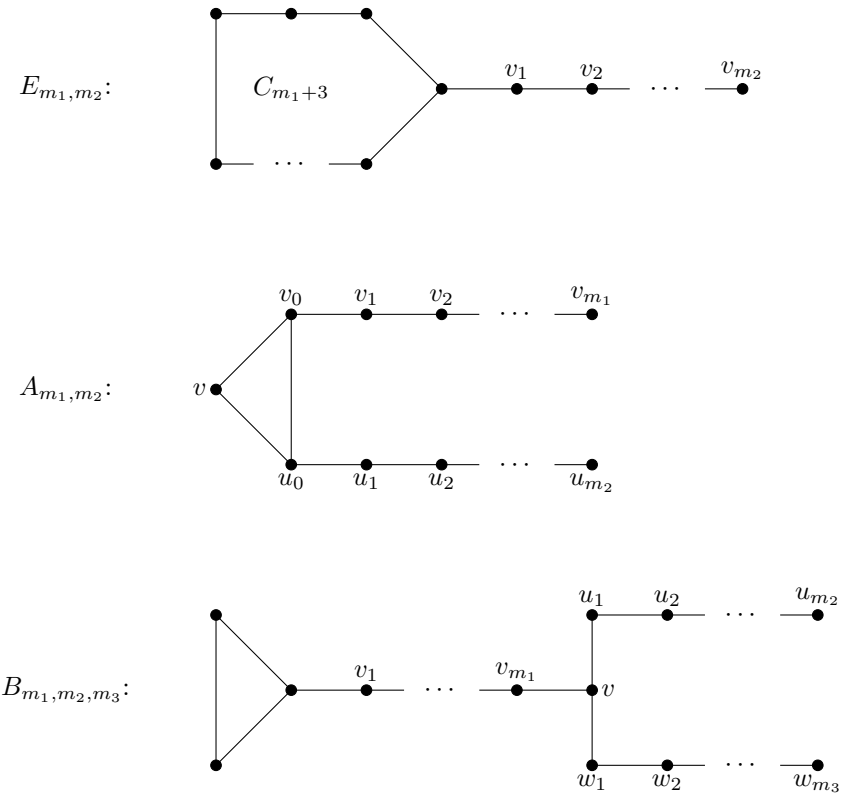
\begin{figure}
\centering
\begin{tikzpicture}
\node at (-2,9) {$E_{m_1,m_2}$:};
\filldraw(0,10) circle[radius=2pt];
\filldraw(1,10) circle[radius=2pt];
\filldraw(2,10) circle[radius=2pt];
\filldraw(0,8) circle[radius=2pt];
\node at (1,8) {$\cdots$};
\node at (1,9) {$C_{m_1+3}$};
\filldraw(2,8) circle[radius=2pt];
\filldraw(3,9) circle[radius=2pt];
\filldraw(4,9) circle[radius=2pt]node[above]{$v_1$};
\filldraw(5,9) circle[radius=2pt]node[above]{$v_2$};
\node at (6,9) {$\cdots$};
\filldraw(7,9) circle[radius=2pt]node[above]{$v_{m_2}$};
\draw(0,10)--(1,10);
\draw(1,10)--(2,10);
\draw(0,10)--(0,8);
\draw(0,8)--(0.5,8);
\draw(1.5,8)--(2,8);
\draw(2,10)--(3,9);
\draw(2,8)--(3,9);
\draw(3,9)--(4,9);
\draw(4,9)--(5,9);
\draw(5,9)--(5.5,9);
\draw(6.5,9)--(7,9);

\node at (-2,5) {$A_{m_1,m_2}$:};
\filldraw(0,5) circle[radius=2pt]node[left]{$v$};
\filldraw(1,6) circle[radius=2pt]node[above]{$v_0$};
\filldraw(2,6) circle[radius=2pt]node[above]{$v_1$};
\filldraw(3,6) circle[radius=2pt]node[above]{$v_2$};
\node at (4,6) {$\cdots$};
\filldraw(5,6) circle[radius=2pt]node[above]{$v_{m_1}$};
\filldraw(1,4) circle[radius=2pt]node[below]{$u_0$};
\filldraw(2,4) circle[radius=2pt]node[below]{$u_1$};
\filldraw(3,4) circle[radius=2pt]node[below]{$u_2$};
\node at (4,4) {$\cdots$};
\filldraw(5,4) circle[radius=2pt]node[below]{$u_{m_2}$};
\draw(0,5)--(1,6);
\draw(0,5)--(1,4);
\draw(1,6)--(2,6);
\draw(2,6)--(3,6);
\draw(3,6)--(3.5,6);
\draw(4.5,6)--(5,6);
\draw(1,6)--(1,4);
\draw(1,4)--(2,4);
\draw(2,4)--(3,4);
\draw(3,4)--(3.5,4);
\draw(4.5,4)--(5,4);

\node at (-2,1) {$B_{m_1,m_2,m_3}$:};
\filldraw(0,0) circle[radius=2pt];
\filldraw(0,2) circle[radius=2pt];
\filldraw(1,1) circle[radius=2pt];
\filldraw(2,1) circle[radius=2pt]node[above]{$v_1$};
\node at (3,1) {$\cdots$};
\filldraw(4,1) circle[radius=2pt]node[above]{$v_{m_1}$};
\filldraw(5,1) circle[radius=2pt]node[right]{$v$};
\filldraw(5,2) circle[radius=2pt]node[above]{$u_1$};
\filldraw(6,2) circle[radius=2pt]node[above]{$u_2$};
\node at (7,2) {$\cdots$};
\filldraw(8,2) circle[radius=2pt]node[above]{$u_{m_2}$};
\filldraw(5,0) circle[radius=2pt]node[below]{$w_1$};
\filldraw(6,0) circle[radius=2pt]node[below]{$w_2$};
\node at (7,0) {$\cdots$};
\filldraw(8,0) circle[radius=2pt]node[below]{$w_{m_3}$};
\draw(0,0)--(0,2);
\draw(0,0)--(1,1);
\draw(0,2)--(1,1);
\draw(1,1)--(2,1);
\draw(2,1)--(2.5,1);
\draw(3.5,1)--(4,1);
\draw(4,1)--(5,1);
\draw(5,1)--(5,2);
\draw(5,2)--(6,2);
\draw(6,2)--(6.5,2);
\draw(7.5,2)--(8,2);
\draw(5,1)--(5,0);
\draw(5,0)--(6,0);
\draw(6,0)--(6.5,0);
\draw(7.5,0)--(8,0);
\end{tikzpicture}
\caption{Candidate graphs for $G_r$}\label{fig:Gr}
\end{figure}

It turns out that the graph $A_{m_1,m_2}$ is independence equivalent to both $E_{m_1,m_2}$ and $E_{m_2,m_1}$. This can be observed using Proposition~\ref{prop:delvert}. Deleting vertices $v_1$ from $E_{m_1,m_2}$ and $u_1$ from $A_{m_1,m_2}$ give isomorphic graphs, and deleting neighbourhoods $N[v_1]$ from $E_{m_1,m_2}$ and $N[u_1]$ from $A_{m_1,m_2}$ also give isomorphic graphs. Therefore $A_{m_1,m_2}$ is independence equivalent to $E_{m_1,m_2}$. Similarly, deleting vertices $v_1$ from $E_{m_2,m_1}$ and $v_1$ from $A_{m_1,m_2}$ gives isomorphic graphs, and deleting neighbourhoods $N[v_1]$ from $E_{m_2,m_1}$ and $N[v_1]$ from $A_{m_1,m_2}$ also give isomorphic graphs, so $A_{m_1,m_2}$ is independence equivalent to $E_{m_2,m_1}$. Therefore, we do not need to consider graphs $E_{m_1,m_2}$ for the rest of this section, as they can be replaced with graphs $A_{m_1,m_2}$.

As $G$ is independence equivalent to $C_n$, their independence numbers are equal so $\alpha(G)=\alpha(C_n)=\frac{1}{2}(n-1)$ since $n$ is odd. As $G=\bigcup_{i=1}^r G_i$ and all the $G_i$ are odd cycles except for $G_r$, we have
$$\alpha(G)=\sum_{i=1}^r \alpha(G_i) =\alpha(G_r)+\sum_{i=1}^{r-1}\frac{|V(G_i)|-1}{2}=\alpha(G_r)-\frac{r-1}{2}+\sum_{i=1}^{r-1}\frac{|V(G_i)|}{2}.$$
Equating this to $\frac{1}{2}(n-1)$ gives us
\begin{equation}\label{eq:boundr}
2\alpha(G_r)=n+r-2-\sum_{i=1}^{r-1}V(G_i)|=|V(G_r)|+r-2.
\end{equation}
Since $G_r$ is of the form $A_{m_1,m_2}$ or $B_{m_1,m_2,m_3}$, we can calculate the independence numbers of both graphs to determine $\alpha(G_r)$ and obtain a bound on $r$.

\begin{lemma}
The independence number $\alpha(A_{m_1,m_2})$ is
\begin{itemize}
    \item $\frac{1}{2}(m_1+m_2+4)=\frac{1}{2}(|V(A_{m_1,m_2})|+1)$ if $m_1$ and $m_2$ are both odd,
    \item $\frac{1}{2}(m_1+m_2+3)=\frac{1}{2}|V(A_{m_1,m_2})|$ if $m_1$ and $m_2$ have different parity,
    \item $\frac{1}{2}(m_1+m_2+2)=\frac{1}{2}(|V(A_{m_1,m_2})|-1)$ if $m_1$ and $m_2$ are both even.
\end{itemize}
Also, the independence number $\alpha(B_{m_1,m_2,m_3})$ is
\begin{itemize}
    \item $\frac{1}{2}(m_1+m_2+m_3+5)=\frac{1}{2}(|V(B_{m_1,m_2,m_3})|+1$ if $m_1,m_2,m_3$ are all odd,
    \item $\frac{1}{2}(m_1+m_2+m_3+4)=\frac{1}{2}|V(B_{m_1,m_2,m_3})|$ if
    \begin{itemize}
        \item $m_1$ is odd and $m_2$ and $m_3$ have different parity,
        \item $m_1$ is even and $m_2$ and $m_3$ are both odd,
        \item $m_1,m_2,m_3$ are all even,
    \end{itemize}
    that is, either none or exactly two of $m_1,m_2,m_3$ are odd,
    \item $\frac{1}{2}(m_1+m_2+m_3+3)=\frac{1}{2}(|V(B_{m_1,m_2,m_3})|-1)$ if
    \begin{itemize}
        \item $m_1$ is odd and $m_2$ and $m_3$ are both even,
        \item $m_1$ is even and $m_2$ and $m_3$ have different parity,
    \end{itemize}
    that is, exactly one of $m_1,m_2,m_3$ is odd.
\end{itemize}
\end{lemma}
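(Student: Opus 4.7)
The approach is to reduce everything to the standard formula $\alpha(P_n) = \lceil n/2 \rceil$, using case analysis on which triangle vertex, if any, lies in a maximum independent set of $G$; equivalently, one can invoke Proposition~\ref{prop:delvert} at the level of independence numbers, namely $\alpha(G) = \max(\alpha(G-u), 1 + \alpha(G-N[u]))$. I handle $A_{m_1,m_2}$ first and then bootstrap the spider computation into $B_{m_1,m_2,m_3}$.

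For $A_{m_1,m_2}$, I would split on whether the maximum independent set contains the triangle vertex $v$ (the one with no attached pendant path), or $v_0$, or $u_0$, or none of the three. In each case the remainder is a disjoint union of two paths on $m_1$ and $m_2$ (or $m_1 - 1$, $m_2$, etc.) vertices. Since $\lceil(m-1)/2\rceil \leq \lceil m/2\rceil$ for all $m \geq 1$, the case containing $v$ is always optimal, giving
$$\alpha(A_{m_1,m_2}) = 1 + \lceil m_1/2 \rceil + \lceil m_2/2 \rceil.$$
Checking the parities of $m_1$ and $m_2$ in the three sub-cases recovers the three formulas stated in the lemma.

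For $B_{m_1,m_2,m_3}$, I would apply Proposition~\ref{prop:delvert} at one of the degree-$2$ triangle vertices, call it $a$. Then $B-a$ is the three-legged spider $T(m_1+2, m_2, m_3)$, whose legs of $m_1+2$, $m_2$, $m_3$ vertices meet at the central vertex $v$, while $B-N[a]$ is the spider $T(m_1, m_2, m_3)$ (degenerating to a path when $m_1 = 0$). Computing $\alpha(T(a,b,c))$ by splitting on whether $v$ is in the independent set and comparing the two sums gives
$$\alpha(T(a,b,c)) = \lceil a/2 \rceil + \lceil b/2 \rceil + \lceil c/2 \rceil + \epsilon,$$
where $\epsilon = 1$ if $a,b,c$ are all even and $\epsilon = 0$ otherwise. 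Since $m_1 + 2$ and $m_1$ share parity, both branches of the max coincide and one obtains
$$\alpha(B_{m_1,m_2,m_3}) = 1 + \lceil m_1/2 \rceil + \lceil m_2/2 \rceil + \lceil m_3/2 \rceil + \epsilon.$$

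The main obstacle is simply the tedious parity bookkeeping needed to verify that this compact formula matches the three-way case split in the lemma (all three odd; none or exactly two odd; exactly one odd), together with confirming that the spider formula remains valid in the degenerate case $m_1 = 0$, where $T(0, m_2, m_3)$ is actually the path $P_{m_2+m_3+1}$. No deeper combinatorial ideas appear to be required.
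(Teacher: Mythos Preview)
Your argument is correct, but it differs from the paper's. The paper applies Proposition~\ref{prop:delvert} at the vertex labelled $v$ (the apex of the triangle in $A_{m_1,m_2}$, the branching vertex in $B_{m_1,m_2,m_3}$) to obtain explicit decompositions of the full independence polynomials,
\[
I(A_{m_1,m_2},x)=I(P_{m_1+m_2+2},x)+xI(P_{m_1},x)I(P_{m_2},x),
\]
\[
I(B_{m_1,m_2,m_3},x)=I(C_{m_1+3},x)I(P_{m_2},x)I(P_{m_3},x)+xI(C_{m_1+2},x)I(P_{m_2-1},x)I(P_{m_3-1},x),
\]
and then reads off the degree in each parity case using the known degrees of $I(P_m,x)$ and $I(C_m,x)$. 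You instead work entirely at the level of independence numbers, delete a degree-$2$ triangle vertex rather than $v$, and route the computation through the auxiliary spider formula $\alpha(T(a,b,c))=\lceil a/2\rceil+\lceil b/2\rceil+\lceil c/2\rceil+\epsilon$. Your route is more elementary in that it never touches polynomials and needs only $\alpha(P_n)=\lceil n/2\rceil$; the paper's route is slicker here because the polynomial identities it derives are reused verbatim in the later leading-coefficient and second-coefficient comparisons, so no extra work is duplicated downstream.
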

\begin{proof}
Using Proposition~\ref{prop:delvert} and the vertices labelled~$v$ in Figure~\ref{fig:Gr}, we find that
$$I(A_{m_1,m_2},x)=I(P_{m_1+m_2+2},x)+xI(P_{m_1},x)I(P_{m_2},x)$$
and
$$
I(B_{m_1,m_2,m_3},x)=\begin{cases}
  \begin{array}{l}
     I(C_{m_1+3},x)I(P_{m_2},x)I(P_{m_3},x)\\
      \qquad+xI(C_{m_1+2},x)I(P_{m_2-1},x)I(P_{m_3-1},x)
  \end{array} & m_1 \geq 1, \\
  \begin{array}{l}
     I(C_{m_1+3},x)I(P_{m_2},x)I(P_{m_3},x)\\
      \qquad+xI(P_2,x)I(P_{m_2-1},x)I(P_{m_3-1},x)
  \end{array} & m_1 =0. \\ 
\end{cases}
$$
Since we know that the degree of $I(P_m,x)$ is $\frac{1}{2}(m+1)$ when $m$ is odd and $\frac{1}{2}m$ when $m$ is even, and the degree of $I(C_m,x)$ is $\frac{1}{2}(m-1)$ when $m$ is odd and $\frac{1}{2}m$ when $m$ is even, we can calculate the degree of $I(A_{m_1,m_2},x)$ and $I(B_{m_1,m_2,m_3},x)$ for each of the cases.
\end{proof}
Substituting each of these possibilities into Equation~\ref{eq:boundr}, we find that $r\in\{1,2,3\}$. However, as $G$ is not a connected graph, $r\neq 1$. This leaves us with the following cases:
\begin{itemize}
    \item $r=2$: Then
    \begin{itemize}
        \item $G=C_3 \cup A_{m_1,m_2}$, where $m_1$ and $m_2$ are of different parity
        \item $G=C_3 \cup B_{m_1,m_2,m_3}$ where either none or exactly two of $m_1,m_2,m_3$ are odd.
    \end{itemize}
    \item $r=3$: Then
    \begin{itemize}
        \item $G=C_3 \cup C_m \cup A_{m_1,m_2}$, where $m_1$ and $m_2$ are both odd, or
        \item $G=C_3 \cup C_m \cup B_{m_1,m_2,m_3}$ where $m_1,m_2,m_3$ are all odd.
    \end{itemize}
    Note that $m\vert n$ (from Corollary~\ref{cor:cyclicdiv}), and $3\nmid m$, since from Proposition~\ref{prop:cyclicroot}, $I(C_n,x)$ does not have repeated roots. Note that we can always replace $C_m$ by $D_m$.
\end{itemize}

Thus far, we have used the fact that the independence coefficients $i_k(G)$ for $k\in\{0,1,2,3\}$ are equal to $i_k(C_n)$ to obtain information about the structure of $G$. We next try to make use of the next coefficient $i_4(G)$. Now, the coefficients of the independence polynomial of the cyclic graph $C_n$ and the path $P_n$ have been found by Hopkins and Staton~\cite{hopkins}.
\begin{lemma}\cite{hopkins}\label{lemma:cycliccoeff}
Let $i_k(G)$ be the number of independent sets of cardinality $k$ in $G$. Then
$$i_k(C_n)=\frac{n}{k}\binom{n-k-1}{k-1}$$
and
$$i_k(P_n)=\binom{n-k+1}{k}.$$
\end{lemma}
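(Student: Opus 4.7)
The plan is to prove both formulas by direct combinatorial counting, settling the path case first and then reducing the cycle case to it.

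For $P_n$, label the vertices $1, 2, \ldots, n$ in order along the path. An independent set of size $k$ is exactly an increasing sequence $a_1 < a_2 < \cdots < a_k$ in $\{1, \ldots, n\}$ satisfying the gap condition $a_{j+1} - a_j \geq 2$. The substitution $b_j = a_j - (j-1)$ then gives a bijection onto arbitrary $k$-element subsets $b_1 < \cdots < b_k$ of $\{1, \ldots, n-k+1\}$, which immediately yields $i_k(P_n) = \binom{n-k+1}{k}$.

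For $C_n$, fix a distinguished vertex $v$ and partition the independent sets of size $k$ according to whether they contain $v$. Sets containing $v$ must exclude both of $v$'s cyclic neighbours, so the remaining $k-1$ vertices form an independent set in the induced path on the other $n-3$ vertices, contributing $\binom{n-k-1}{k-1}$ sets by the path count. Sets not containing $v$ are independent sets of size $k$ in an induced path on $n-1$ vertices, contributing $\binom{n-k}{k}$. Using the identity $\binom{n-k}{k} = \tfrac{n-k}{k}\binom{n-k-1}{k-1}$, the total collapses to
$$i_k(C_n) = \binom{n-k-1}{k-1} + \binom{n-k}{k} = \frac{n}{k}\binom{n-k-1}{k-1},$$
as required.

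No substantive obstacle arises; the argument is entirely elementary. The only minor care needed is at the boundaries: at $k=0$ the cycle formula should be read as $i_0(C_n) = 1$ by convention (matching the unique empty independent set), and at $2k > n$ both formulas correctly vanish under the standard convention $\binom{a}{b} = 0$ for $b < 0$ or $b > a$.
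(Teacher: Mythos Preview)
Your argument is correct and is the standard combinatorial proof of these identities. Note, however, that the paper does not actually supply its own proof of this lemma: it is stated as a citation of Hopkins and Staton and used as a black box. So there is no paper proof to compare against; you have simply filled in a well-known derivation that the authors chose to quote from the literature.
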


We also need the following:
\begin{lemma}\label{lem:i4}
If $G$ is a graph such that $I(G,x)=I(C_n,x)$ where $n \geq 5$ is odd. Then
\begin{eqnarray*}
\frac{n(3n-11)}{2} & = & e_2(G) + n_G(P_3 \cup K_1) - n_G(C_3 \cup K_1) \\ 
& &- n_G(P_4) - n_G(K_{1,3}) + n_G(D_4) + n_G(C_4).
\end{eqnarray*}
where $e_2(G)$ is the number of matchings of size~$2$ in $G$ and $n_G(H)$ is the number of subgraphs in $G$ (not necessarily induced) which are isomorphic to $H$.
\end{lemma}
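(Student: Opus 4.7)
The plan is to compute $i_4(G)$ directly by inclusion--exclusion over edge subsets of $G$, enumerate the contributing small subgraph types, and then rearrange the resulting expression using $i_4(G)=i_4(C_n)$ together with the structural results already established.

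First, for each $4$-subset $S\subseteq V(G)$ I will write $[S\text{ is independent}]=\prod_{e\in E(G)}(1-[e\subseteq S])$ and expand to obtain
$$i_4(G)=\sum_{F\subseteq E(G)}(-1)^{|F|}\binom{n-|V(F)|}{4-|V(F)|},$$
where $V(F)$ is the set of endpoints of edges in $F$ and the binomial vanishes whenever $|V(F)|>4$. Grouping $F$ by the isomorphism class of the (isolate-free) subgraph $(V(F),F)$, the types that appear, organized by $|F|$, are: the empty graph; $K_2$; $P_3$ and $2K_2$; $C_3,P_4,K_{1,3}$; $C_4$ and the paw $D_4$; $K_4-e$; and $K_4$. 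The completion factor $\binom{n-|V(F)|}{4-|V(F)|}$ evaluates to $\binom{n}{4}$, $\binom{n-2}{2}$, $n-3$, or $1$ according to $|V(F)|\in\{0,2,3,4\}$.

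Next I will show the $K_4-e$ and $K_4$ contributions vanish. By Proposition~\ref{prop:unicyclicsubgraph} (together with the observation that $C_n$ and $D_n$ are themselves unicyclic), every connected component of $G$ is unicyclic, so any connected subgraph of $G$ lies in a single such component and inherits cyclomatic number at most one. But $K_4-e$ and $K_4$ are connected with cyclomatic numbers $2$ and $3$ respectively, so $n_G(K_4-e)=n_G(K_4)=0$.

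For the surviving terms I will identify $n_G(2K_2)$ with $e_2(G)$, and observe that pairing each $P_3$ (resp.\ $C_3$) subgraph of $G$ with any of the remaining $n-3$ vertices as the isolated vertex gives a bijection showing
$$n_G(P_3\cup K_1)=(n-3)\,n_G(P_3),\qquad n_G(C_3\cup K_1)=(n-3)\,n_G(C_3).$$
The hypothesis $i_2(G)=i_2(C_n)$ forces $|E(G)|=n_G(K_2)=n$. Substituting everything and using $i_4(G)=i_4(C_n)=\tfrac{n}{4}\binom{n-5}{3}$, the claimed identity reduces to the polynomial check
$$i_4(C_n)-\binom{n}{4}+n\binom{n-2}{2}=\frac{n(3n-11)}{2},$$
which follows by direct expansion. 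The only substantive step is the cyclomatic argument eliminating $K_4-e$ and $K_4$; the rest is careful enumeration and routine algebra.
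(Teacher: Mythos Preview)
Your proof is correct and follows essentially the same approach as the paper: both arguments compute $i_4(G)$ by inclusion--exclusion over edge subsets, classify the contributing $4$-vertex subgraph types, use the unicyclic structure of the components to rule out $K_4-e$ and $K_4$, and then compare with the explicit value $i_4(C_n)=\tfrac{n}{4}\binom{n-5}{3}$. The only cosmetic difference is that you phrase the PIE via the completion factor $\binom{n-|V(F)|}{4-|V(F)|}$ summed over edge sets $F$, whereas the paper phrases it via induced $4$-vertex subgraphs containing a given edge; these are equivalent formulations of the same count.
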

\begin{proof}
We try to find an expression for $i_4(G)$ using the Principle of Inclusion and Exclusion. Any subset of 4 vertices would induce one of the subgraphs in the table below. As each connected component of $G$ is unicyclic, we do not need to consider subgraphs with more than $4$ edges as they would contain more than one cycle.
\begin{center}
\begin{tabular}{c|c}
    Number of edges $|E(H)|$ & Subgraphs $H$\\
    \hline\hline
    $0$ & $\overline{K}_4$ \\
    \hline
    $1$ & $P_2\cup\overline{K}_2$ \\
    \hline
    $2$ & $2P_2$ and $P_3\cup K_1$ \\
    \hline
    $3$ & $C_3\cup K_1$, $P_4$ and $K_{1,3}$ \\
    \hline
    $4$ & $D_4$ and $C_4$
\end{tabular}
\end{center}
Let $\mathcal{G}_4$ be the collection of graphs listed in the table above.

For each edge $e \in E(G)$, let $X_e$ (respectively, $\overline{X_e}$) be the set of induced subgraphs of $G$ with $4$ vertices and which contain (do not contain) $e$ as an edge. Then
\begin{eqnarray*}
i_4(G) & = & \left| \bigcap\limits_{e\in E(G)}\overline{X_e}\right| \\
& = & \sum_{Y\subseteq E(G)} (-1)^{|Y|}\left|\bigcap\limits_{e\in Y}X_e \right|,
\end{eqnarray*}
by the Principle of Inclusion and Exclusion.
For each $k$, $1\leq k\leq |E(G)|$, consider a set $Y\subseteq E(G)$ such that $|Y| = k$. Then $\bigcap\limits_{e\in Y}X_e$ is the collection of graphs $H$ which have $4$ vertices and edge set $E(H) = Y$. Therefore,
$$\sum_{\substack{H\in\mathcal{G}_4,\\|E(H)|=k}} n_G (H) = \sum_{\substack{Y\subseteq E(H), \\|Y|=k}} \left|\bigcap\limits_{e\in Y}X_e \right|.$$
Substituting this into the above expression for $i_4(G)$ gives us
$$i_4(G) = \sum_{H\in\mathcal{G}_4} (-1)^{|E(H)|} n_{G}(H).$$
Expanding this summation using the values of $n_G(H)$ from the table above gives us
\begin{eqnarray*}
i_4(G) & = & \binom{n}{4}-n\binom{n-2}{2}\\
       &   & + e_2(G) + n_G(P_3 \cup K_1)\\
       &   & - n_G(C_3 \cup K_1) - n_G(P_4) - n_G(K_{1,3})\\
       &   & + n_G(D_4)+ n_G(C_4)
\end{eqnarray*}
because
$$n_G(\overline{K}_4) = \binom{n}{4},\qquad n_G(P_2\cup\overline{K}_2) = n\binom{n-2}{2},\qquad n_G(2P_2) = e_2(G).$$
On the other hand, from Lemma~\ref{lemma:cycliccoeff},
$$i_4(G) = i_4(C_n) = \frac{n(n-5)(n-6)(n-7)}{24}.$$
Comparing the two expressions for $i_4(G)$ gives us the required result.
\end{proof}

\subsection{The subcase $G=C_3 \cup A_{m_1,m_2}$}\label{sec:notmult3-1}

We tabulate the number of each subgraph with~2 to~4 vertices for $G=C_3\cup A_{m_1,m_2}$. (Here, $n=|V(G)|=|V(A_{m_1,m_2})|+3$.)

We will do it in detail for this subcase, and leave the remaining subcases for the reader to fill in. The vertex labels below refer to those in Figure~\ref{fig:Gr}.

\begin{itemize}
    \item $2P_2$: This is the number of matchings of size~$2$ in $C_3\cup A_{m_1,m_2}$. In such a matching, either the two edges are both in $A_{m_1,m_2}$ or one edge is in $C_3$ and one edge is in $A_{m_1,m_2}$. There are $e_2(A_{m_1,m_2})$ matchings in the first case. We find $e_2(A_{m_1,m_2})$ by subtracting the number of pairs of adjacent edges from $\binom{n-3}{2}$. Now there are $n-7$ vertices of degree~$2$ (namely, $v$, $v_1$ to $v_{m_1-1}$ and $u_1$ to $u_{m_2-1}$) in $A_{m_1,m_2}$ that correspond to a pair of adjacent edges, and two vertices of degree~$3$ ($v_0$ and $u_0$) that correspond to three pairs of adjacent edges each. Hence there are $n-1$ pairs of adjacent edges, so $e_2(A_{m_1,m_2})=\binom{n-3}{2}-(n-1)$. In the second case, the number of matchings is $|E(C_3)||E(A_{m_1,m_2})=3(n-3)$. Therefore, $n_G(2P_2)=\binom{n-3}{2}-(n-1)+3(n-3)$.
    \item $P_3\cup K_1$: We count the number of $P_3$ subgraphs according to the central vertex (the vertex of degree~$2$) in $P_3$. Each vertex of degree~$2$ in $G$ can be the central vertex in exactly one $P_3$ subgraph. There are altogether $n-4$ such vertices. There are also two vertices of degree~$3$ ($v_0$ and $u_0$) that can be the central vertex of three $P_3$ subgraphs each. Hence there are altogether $n+2$ $P_3$ subgraphs. For each $P_3$ subgraph, there are $n-3$ vertices that are not in the $P_3$, and hence can be the $K_1$. Therefore, $n_G(P_3\cup K_1)=(n+2)(n-3)$.
    \item $C_3\cup K_1$: There are two $C_3$ subgraphs in $G$, and for each of them, there are $n-3$ vertices that are not in the $C_3$, and hence can be the $K_1$. Therefore, $n_G(C_3\cup K_1)=2(n-3)$.
    \item $P_4$: This can only be a subgraph of $A_{m_1,m_2}$. We count the number of $P_4$ subgraphs according to the central edge (the edge incident to two vertices of degree~$2$) in $P_4$. The edges $vv_0$ and $vu_0$ can each be the central edge in exactly one $P_4$ subgraph. The edge $v_0u_0$ can be the central edge in three $P_4$ subgraphs.
    
    If $m_1\geq 2$ then $v_0v_1$ can be the central edge in two $P_4$ subgraphs, and $v_{i-1}v_i$ can be the central edge in one $P_4$ subgraph for $2\leq i\leq m_1-1$. However, if $m_1 = 1$ then we do not have any of these subgraphs. Then number of $P_4$ subgraphs here is thus $m_1-1+\alpha_1$ where\[   
    \alpha_1 = 
     \begin{cases}
       1 &\quad\text{if }m_1\geq 2 \\
       0 &\quad\text{if }m_1=1. \\
     \end{cases}
    \]
    A similar argument holds for the edges $u_{i-1}u_i$ for $1\leq i\leq m_2-1$. Therefore, $n_G(P_4)=5+(m_1-1+\alpha_1)+(m_2-1+\alpha_2)=n-3+\alpha_1+\alpha_2$ since $m_1+m_2=n-6$.
    \item $K_{1,3}$: There are only two vertices ($v_0$ and $u_0$) that can be the vertex of degree~$3$ in $K_{1,3}$, and each of them correspond to exactly one $K_{1,3}$ subgraph.
    \item $D_4$  There are only two vertices ($v_0$ and $u_0$) that can be the vertex of degree~$3$ in $D_4$, and each of them correspond to exactly one $D_4$ subgraph.
    \item $C_4$: There are none.
\end{itemize}

The results are summarised in the following table.
\begin{center}
\begin{tabular}{c|c}
    Subgraph $H$ & $n_G(H)$ \\
    \hline\hline
    $2P_2$ & $\begin{aligned} & e_2(A_{m_1,m_2})+3|E(A_{m_1,m_2})| \\ = & \displaystyle\binom{n-3}{2}-(n-1)+3(n-3)\end{aligned}$ \\
    \hline
    $P_3\cup K_1$ & $(n+2)(n-3)$ \\
    \hline
    $C_3\cup K_1$ & $2(n-3)$ \\
    \hline
    $P_4$ & $n-3+\alpha_1+\alpha_2$ \\
    \hline
    $K_{1,3}$ & $2$ \\
    \hline
    $D_4$ & $2$ \\
    \hline
    $C_4$ & $0$
\end{tabular}
\end{center}
where
\[   
\alpha_i = 
     \begin{cases}
       1 &\quad\text{if }m_i\geq 2 \\
       0 &\quad\text{if }m_i=1 \\
     \end{cases}
\]
for $i\in\{1,2\}$. 

Lemma~\ref{lem:i4} gives us $\alpha_1+\alpha_2=1$ so, without loss of generality, $m_1\geq 2$ is even and $m_2=1$.

Applying Proposition~\ref{prop:delvert} to vertex $u_1$, we have
$$I(A_{m_1,1},x)=I(D_{m_1+3},x)+xI(P_{m_1+2},x)=I(C_{m_1+3},x)+xI(P_{m_1+2},x),$$ so the leading coefficient of $I(A_{m_1,1},x)$ is $\frac{1}{2}m_1+2$. Therefore, the leading coefficient of
$$I(G,x)=I(C_3,x)I(A_{m_1,1},x)=(1+3x)I(A_{m_1,1},x)$$
is $\frac{3}{2}m_1+6$. Equating this to the leading coefficient of $I(C_n,x)$, which is $n$, and recalling that $n=m_1+7$ gives us $m_1=2$ or $n=9$ as the only solution. Explicit computation shows that $C_9$ is independence equivalent to $C_3 \cup A_{2,1}$. As we have seen, $A_{2,1}$ is itself independence equivalent to $E_{1,2}$ and $E_{2,1}$. These are the graphs $G_a$, $G_b$ and $G_c$ listed in Theorem~\ref{mytheorem}, and are the cases found by Oboudi~\cite{oboudi}.

\subsection{The subcase $G=C_3 \cup B_{m_1,m_2,m_3}$}\label{sec:notmult3-2}

We tabulate the number of each subgraph with~2 to~4 vertices for $G=C_3 \cup B_{m_1,m_2,m_3}$. (Here, $n=|V(G)|=|V(B_{m_1,m_2,m_3})|+3$.)

\begin{center}
\begin{tabular}{c|c}
    Subgraph $H$ & $n_G(H)$ \\
    \hline\hline
    $2P_2$ & $\begin{aligned}&e_2(B_{m_1,m_2,m_3})+3|E(B_{m_1,m_2,m_3})| \\ = & \displaystyle\binom{n-3}{2}-(n-1)+3(n-3)\end{aligned}$ \\
    \hline
    $P_3\cup K_1$ & $(n+2)(n-3)$ \\
    \hline
    $C_3\cup K_1$ & $2(n-3)$ \\
    \hline
    $P_4$ & $n-4+\alpha_1+\alpha_2+\alpha_3$ \\
    \hline
    $K_{1,3}$ & $2$ \\
    \hline
    $D_4$ & $1$ \\
    \hline
    $C_4$ & $0$
\end{tabular}
\end{center}
where
\[   
\alpha_1 = 
     \begin{cases}
       1 &\quad\text{if }m_1=0 \\
       0 &\quad\text{if }m_1\geq1 \\
     \end{cases}
\]
and
\[   
\alpha_i = 
     \begin{cases}
       1 &\quad\text{if }m_i\geq 2 \\
       0 &\quad\text{if }m_i=1 \\
     \end{cases}
\]
for $i\in\{2,3\}$. 

Lemma~\ref{lem:i4} gives us $\alpha_1+\alpha_2+\alpha_3=1$ so either $m_1=0$ and $m_2=m_3=1$ or, without loss of generality, $m_1\geq 1$, $m_2\geq 2$ and $m_3=1$. The former case gives us $G=C_3\cup B_{0,1,1}$, and explicit computation of the independence polynomial shows that it is indeed independence equivalent to $C_9$. This is the graph $G_d$ mentioned in Theorem~\ref{mytheorem}.

In the latter case, since $m_3=1$ is odd, $m_1$ and $m_2$ must have different parity. Now 
\begin{eqnarray*}
I(B_{m_1,m_2,1},x) & = & I(D_{m_1+m_2+4},x)+xI(D_{m_1+3},x)I(P_{m_2},x)\\
               & = & I(C_{m_1+m_2+4},x)+xI(C_{m_1+3},x)I(P_{m_2},x)
\end{eqnarray*}
Suppose $m_1$ is odd and $m_2$ is even. Then the leading coefficient of $I(B_{m_1,m_2,1},x)$ is $2(\frac{1}{2}m_2+1)=m_2+2$, which is even. Therefore, the leading coefficient of
$$I(G,x) = I(C_3,x)I(B_{m_1,m_2,1},x)$$
is also even (in fact, it is $3m_2+6$). However, the leading coefficient of $I(C_n,x)$ is $n$, an odd number. Therefore, this cannot be the case.

Suppose $m_1$ is even and $m_2$ is odd. Then the leading coefficient of $I(B_{m_1,m_2,1},x)$ is $m_1+3$ so the leading coefficient of $I(G,x)$ is $3(m_1+3)$. But this has to be equal to the leading coefficient of $I(C_n,x)$, which is $n=m_1+m_2+8$. Therefore, $m_2 = 2m_1+1$ and $n=3m_1+9$.

We have
\begin{eqnarray*}
I(B_{m_1,m_2,1},x) & = & I(C_{m_1+m_2+4},x)+xI(C_{m_1+3},x)I(P_{m_2},x)\\
               & = & \left(1+\ldots+(m_1+m_2+4)x^\frac{m_1+m_2+3}{2}\right)\\
                 & & +x\left(1+\ldots+\frac{(m_1+3)^3-(m_1+3)}{24}x^\frac{m_1}{2}+(m_1+3)x^\frac{m_1+2}{2}\right)\\
                 & & \left(1+\ldots+\frac{(m_2+3)(m_2+1)}{8}x^\frac{m_2-1}{2}+x^\frac{m_2+1}{2}\right)
\end{eqnarray*}
and therefore the coefficient of $x^{\frac{1}{2}(m_1+m_2+3)}$ in $I(B_{m_1,m_2,1},x)$ is
\begin{eqnarray*}
& & m_1+m_2+4+\frac{(m_1+3)^3-(m_1+3)}{24}+\frac{(m_1+3)(m_2+3)(m_2+1)}{8}\\
& = & \frac{13m_1^3+93m_1^2+287m_1+279}{24}.
\end{eqnarray*}
The coefficient of $x^{\frac{1}{2}(m_1+m_2+5)}=x^\frac{1}{2}(n-3)$ in $I(G,x) = I(C_3,x)I(B_{m_1,m_2,1},x)$ is thus
$$3\cdot\frac{13m_1^3+81m_1^2+230m_1+216}{24}+1\cdot(m_1+3).$$
This has to be equal to the coefficient of $x^\frac{1}{2}(n-3)$ in $I(C_n,x)$ which is
$$\frac{n^3-n}{24}=\frac{(3m_1+9)^3-(3m_1+9)}{24}$$.
This ultimately simplifies to
$$m_1^3 - m_1 = 0.$$
Since $m_1$ is an even integer, the only possibility is $m_1=0$, contradicting the supposition that $m_1 \geq 1$.

\subsection{The subcase $G=C_3 \cup C_m \cup A_{m_1,m_2}$}\label{sec:notmult3-3}

We tabulate the number of each subgraph with~2 to~4 vertices for $G=C_3 \cup C_m \cup A_{m_1,m_2}$. Since $m$ is an odd number not divisible by $3$, $m\geq 5$. Also, $m_1$ and $m_2$ are both odd. (Here, $n=|V(G)|=|V(A_{m_1,m_2})|+m+3$.)

\begin{center}
\begin{tabular}{c|c}
    Subgraph $H$ & $n_G(H)$ \\
    \hline\hline
    $2P_2$ & $\begin{aligned}  & e_2(C_m)+e_2(A_{m_1,m_2})\\&+3m+(3+m)|E(A_{m_1,m_2})| \\ = & \displaystyle\frac{m(m-3)}{2}+\displaystyle\binom{n-3-m}{2}-(n-1-m)\\&+3m+(3+m)(n-3-m)\end{aligned}$ \\
    \hline
    $P_3\cup K_1$ & $(n+2)(n-3)$ \\
    \hline
    $C_3\cup K_1$ & $2(n-3)$ \\
    \hline
    $P_4$ & $n-3+\alpha_1+\alpha_2$ \\
    \hline
    $K_{1,3}$ & $2$ \\
    \hline
    $D_4$ & $2$ \\
    \hline
    $C_4$ & $0$
\end{tabular}
\end{center}
where
\[   
\alpha_i = 
     \begin{cases}
       1 &\quad\text{if }m_i\geq 3 \\
       0 &\quad\text{if }m_i=1 \\
     \end{cases}
\]
for $i\in\{1,2\}$.

Lemma~\ref{lem:i4} gives us $\alpha_1+\alpha_2=1$ so, without loss of generality, $m_1\geq 3$ is odd and $m_2=1$.

Applying Proposition~\ref{prop:delvert} to vertex $u_1$, we have
\begin{eqnarray*}
I(A_{m_1,1},x) & = & I(D_{m_1+3},x)+xI(P_{m_1+2},x)\\
               & = & I(C_{m_1+3},x)+xI(P_{m_1+2},x)\\
               & = & \left(1+\ldots+2x^\frac{m_1+3}{2}\right)+x\left(1+\ldots+  \binom{\frac{m_1+5}{2}}{2}x^\frac{m_1+1}{2}+x^\frac{m_1+3}{2}\right)\\
               & = & 1+\ldots+\left(2+\frac{(m_1+5)(m_1+3)}{8}\right)x^\frac{m_1+3}{2}+x^\frac{m_1+5}{2}
\end{eqnarray*}
Comparing the leading coefficients of $I(C_n,x)$ and 
$$I(G,x) = I(C_3,x)I(C_m,x)I(A_{m_1,1},x),$$
gives us $n=3m$.

Comparing the coefficients of $x^\frac{n-3}{2}$ in $I(C_n,x)$ and $I(G,x)$ gives us
$$m+\frac{m^3-m}{8}+3m\left(2+\frac{(m_1+5)(m_1+3)}{8}\right)=\frac{n^3-n}{24}.$$
Substituting $n=3m$ and $m_1=n-m-7$, this equation reduces to
$$(m-5)(m-4)=0.$$
Since $m$ is odd, $m=5$ is the only solution. This gives us $G=C_3 \cup C_5 \cup A_{3,1}$. Explicit computation shows that $G$ is indeed independence equivalent to $C_{15}$. As we have seen, $A_{3,1}$ is itself independence equivalent to $E_{1,3}$ and $E_{3,1}$. These are the graphs $G^\prime_a$, $G^\prime_b$ and $G^\prime_c$ listed in Theorem~\ref{mytheorem}, and were first discovered by Beaton, Brown and Cameron~\cite{beaton}.

\subsection{The subcase $G=C_3 \cup C_m \cup B_{m_1,m_2,m_3}$}\label{sec:notmult3-4}

We tabulate the number of each subgraph with~2 to~4 vertices and at most~4 edges for $G=C_3 \cup C_m \cup B_{m_1,m_2,m_3}$. (Here, $n=|V(G)|=|V(B_{m_1,m_2,m_3})|+m+3$.) 

\begin{center}
\begin{tabular}{c|c}
    Subgraph $H$ & $n_G(H)$ \\
    \hline\hline
    $2P_2$ & $\begin{aligned}  & e_2(C_m)+e_2(B_{m_1,m_2,m_3})\\&+3m+(3+m)|E(B_{m_1,m_2,m_3})| \\ = & \displaystyle\frac{m(m-3)}{2}+\displaystyle\binom{n-3-m}{2}-(n-1-m)\\&+3m+(3+m)(n-3-m)\end{aligned}$ \\
    \hline
    $P_3\cup K_1$ & $(n+2)(n-3)$ \\
    \hline
    $C_3\cup K_1$ & $2(n-3)$ \\
    \hline
    $P_4$ & $n-4+\alpha_1+\alpha_2+\alpha_3$ \\
    \hline
    $K_{1,3}$ & $2$ \\
    \hline
    $D_4$ & $1$ \\
    \hline
    $C_4$ & $0$
\end{tabular}
\end{center}
where
\[   
\alpha_1 = 
     \begin{cases}
       1 &\quad\text{if }m_1=0 \\
       0 &\quad\text{if }m_1\geq1 \\
     \end{cases}
\]
and
\[   
\alpha_i = 
     \begin{cases}
       1 &\quad\text{if }m_i\geq 2 \\
       0 &\quad\text{if }m_i=1 \\
     \end{cases}
\]
for $i\in\{2,3\}$. 

Equating $i_4(C_n)$ and $i_4(G)$ gives us $\alpha_1+\alpha_2+\alpha_3=1$ so either $m_1=0$ and $m_2=m_3=1$ or, without loss of generality, $m_1\geq 1$, $m_2\geq 2$ and $m_3=1$. The former case is impossible as the total number of vertices of $G$ is even whereas the total number of vertices of $C_n$ is odd. In the latter case, $m_1$ and $m_2$ are both odd.

Applying Proposition~\ref{prop:delvert} to vertex $w_1$, we have
\begin{eqnarray*}
I(B_{m_1,m_2,1},x) & = & I(D_{m_1+m_2+4},x)+xI(D_{m_1+3},x)I(P_{m_2},x)\\
               & = & I(C_{m_1+m_2+4},x)+xI(C_{m_1+3},x)I(P_{m_2},x)\\
               & = & \left(1+\ldots+2x^\frac{m_1+m_2+4}{2}\right)\\
               & & +x\left(1+\ldots+2x^\frac{m_1+3}{2}\right)\left(1+\ldots+x^\frac{m_2+1}{2}\right)\\
               & = & 1+\ldots+2x^\frac{m_1+m_2+6}{2}
\end{eqnarray*}
The leading coefficient of $I(B_{m_1,m_2,1},x)$ is $2$, an even number. Therefore, the leading coefficient of
$$I(G,x) = I(C_3,x)I(C_m,x)I(B_{m_1,m_2,1},x)$$
is also even (in fact, it is $6m$). However, the leading coefficient of $I(C_n,x)$ is $n$, an odd number. This contradiction means that this case is impossible.

This concludes the proof of Theorem~\ref{mytheorem}. In Section~\ref{sec:notmult3}, it was shown, for odd $n$ that were not multiples of~$3$, that $\mathcal{I}(C_n)=\{C_n,D_n\}$. In Section~\ref{sec:mult3}, we searched for graphs in $\mathcal{I}(C_n)$ that are not isomorphic to $C_n$ or $D_n$, leading to the four subcases. Subcases~\ref{sec:notmult3-1} and~\ref{sec:notmult3-2} turned up graphs in $\mathcal{I}(C_9)$, while Subcase~\ref{sec:notmult3-3} turned up graphs in $\mathcal{I}(C_{15})$. Subcase~\ref{sec:notmult3-4} turned out to be impossible.

\printbibliography
\end{document}